\newtheorem{thm}{Theorem}[section]
\newtheorem{lem}[thm]{Lemma}
\theoremstyle{definition}
\newtheorem{exa}[thm]{Example}
\numberwithin{equation}{section}
\newcommand{\R}{\mathbb{R}}
\newcommand{\X}{\mathcal{X}}
\newcommand{\XM}{\X(M)}
\newcommand{\const}{\operatorname{const}}
\newcommand{\D}{\operatorname{D}}
\newcommand{\id}{\operatorname{I}}
\newcommand{\DD}{\mathcal{D}}
\newcommand{\Span}{\operatorname{span}}
\newcommand{\Cbar}{{\bar{C}}}
\newcommand{\Jt}{\widetilde{J}}
\newcommand{\Ct}{\widetilde{\mathbb{C}}}
\begin{document}

%%%%% To ease editing, for IMPAN journals add:

\baselineskip=17pt

%%%%%%%%%%%

%% In the running head, replace first names by initials
%% and give an abbreviation of the title.

\title[On $3$-dimensional $\Jt$-tangent centro-affine hypersurfaces...]{On $3$-dimensional $\Jt$-tangent centro-affine hypersurfaces and $\Jt$-tangent affine hyperspheres with some null-directions}

\author[Z. Szancer]{Zuzanna Szancer}
\address{Department of Applied Mathematics \\ University of Agriculture in Krakow\\ 253c Balicka Street\\
30-198 Krakow, Poland}
\email{zuzanna.szancer@urk.edu.pl}

\date{}

\begin{abstract}
Let $\Jt$ be the canonical para-complex structure on $\R^4$.
In this paper we study $3$-dimensional centro-affine hypersurfaces with a $\Jt$-tangent centro-affine vector field
(sometimes called $\Jt$-tangent centro-affine hypersurfaces) as well as $3$-dimensional $\Jt$-tangent affine hyperspheres with the property that at least one null-direction of the second fundamental form coincides with either $\DD^+$ or $\DD^-$. The main purpose of this paper is to give a full local classification of the above mentioned hypersurfaces. In particular, we prove that every nondegenerate centro-affine hypersurface of dimension $3$ with a $\Jt$-tangent centro-affine vector field which has two null-directions $\DD^+$ and $\DD^-$ must be both an affine hypersphere and a hyperquadric. Some examples of these hypersurfaces are also given.
\end{abstract}

\subjclass[2010]{53A15, 53D15}

\keywords{ centro-affine hypersurface, almost paracontact structure, affine hypersphere, null-direction}

\maketitle

\section{Introduction}
\par Para-complex and paracontact geometry plays an important role in mathematical physics (see \cite{Z}, \cite{CKM}, \cite{KM}). On the other hand affine differential geometry and in particular affine hyperspheres have been extensively studied over past decades. Some relations between para-complex and affine differential geometry can be found in \cite{LS}, \cite{CLS} and \cite{SZ}.
\par Let us denote by $\Ct$ the real algebra of para-complex numbers (for details we refer to \cite{CFG}, \cite{CMMS}) and let $\Jt$ be the canonical para-complex structure on $\R^{2n+2}\cong \Ct^{n+1}$. A transversal vector field for an affine hypersurface $f\colon M\rightarrow \R ^{2n+2}$ is called $\Jt$-tangent if $\Jt$ maps it into a tangent space. Such vector field induces in a natural manner an almost paracontact structure $(\varphi,\xi,\eta)$. We also have the biggest
$\Jt$-invariant distribution in $TM$ which we denote by $\DD$. $\DD$ splits into $\DD^+$ and $\DD^-$-- eigen spaces related to eigen values $+1$ and $-1$ respectively (for details we refer to Sec. 3 of this paper).
\par In \cite{SZ3} the author studied affine hypersurfaces with a  $\Jt$-tangent transversal vector field and gave a local classification of $\Jt$-tangent affine hyperspheres with an involutive distribution $\DD$.
\par In this paper we study real affine hypersurfaces $f\colon M^{3}\rightarrow \R ^{4}\cong \Ct^{2} $ of the para-complex space $\Ct ^{2}$ with a $\Jt$-tangent transversal vector field $C$ and an induced almost paracontact structure $(\varphi,\xi,\eta)$. In \cite{SZ3} the author showed that when $C$ is centro-affine (not necessarily Blaschke) then $f$ can be locally expressed in the form:
\begin{equation}\label{eq::wst}
f(x_1,\ldots,x_{2n},z)=\Jt g(x_1,\ldots,x_{2n})\cosh z-g(x_1,\ldots,x_{2n})\sinh z,
\end{equation}
where $g$ is some smooth immersion defined on an open subset of $\R^{2n}$. Basing on the above result we provide a local classification of all $3$-dimensional nondegenerate centro-affine hypersurfaces with a $\Jt$-tangent transversal centro-affine vector field as well as $\Jt$-tangent affine hyperspheres with the null-direction $\DD^+$ or $\DD^-$. Moreover, in this case distribution $\DD =\DD^+\oplus \DD^-$ is not involutive, so affine hyperspheres are completely different from these studied in \cite{SZ3}.
In particular, we give explicit examples of such hyperspheres.
\par In section 2 we briefly recall the basic formulas of affine differential geometry. We recall the notion of a Blaschke field and an affine hypersphere.
\par In section 3 we recall the definition of an almost paracontact structure introduced for the first time in \cite{KW}. We recall the notion of $\Jt$-tangent transversal vector field and the $\Jt$-invariant distribution. We also recall some elementary results for induced almost paracontact structures that will be used later in this paper ( for details we refer to \cite{SZZ}).
\par In Section 4 we introduce the definition of the null-direction for a nondegenerate affine hypersurface.
In this section we find local representation for nondegenerate centro-affine hyperurfaces $f\colon M\rightarrow \R^4$ with a $\Jt$-tangent centro-affine
transversal vector field with a property that either $\DD^+$ or $\DD^-$ is the null-direction for $f$.
To illustrate this situation we give some explicit examples of such hypersurfaces. Moreover, we prove that every centro-affine nondegenerate hypersurface with a $\Jt$-tangent centro-affine vector field and two null-directions $\DD^+$ and $\DD^-$ is both the affine hypersphere and the hyperquadric.
\par Section 5 concerns the case of $\Jt$-tangent affine hyperspheres. In this section we recall the notion of a $\Jt$-tangent affine hypersphere and prove classification theorems. We show that every $3$-dimensional  $\Jt$-tangent affine hypersphere with the null-direction $\DD^+$ or $\DD^-$ can be locally constructed from two regular flat curves $\alpha, \beta\colon I\rightarrow \R^2$ with a property $\det[\alpha,\alpha']\neq 0$, $\det[\beta,\beta']\neq 0$. As an application we give examples of such hyperspheres.
\section{Preliminaries}
In this section we briefly recall the basic formulas of affine differential
geometry. For more details, we refer to \cite{NS}.
\par Let $f\colon M\rightarrow\R^{n+1}$ be an orientable,
connected differentiable $n$-dimensional hypersurface immersed in
affine space $\R^{n+1}$ equipped with its usual flat connection
$\D$. Then, for any transversal vector field $C$ we have
\begin{equation}\label{eq::FormulaGaussa}
\D_Xf_\ast Y=f_\ast(\nabla_XY)+h(X,Y)C
\end{equation}
and
\begin{equation}\label{eq::FormulaWeingartena}
\D_XC=-f_\ast(SX)+\tau(X)C,
\end{equation}
where $X,Y$ are vector fields tangent to $M$. For any transversal vector field $\nabla$ is a torsion-free connection, $h$ is a symmetric
bilinear form on $M$, called the second fundamental form, $S$ is a tensor of type $(1,1)$, called the shape operator, and $\tau$ is a 1-form, called the transversal connection form.
\par We shall now consider the change of a transversal vector field for a given immersion $f$.
\begin{thm}[\cite{NS}]\label{tw::ChangeOfTransversalField}
Suppose we change a transversal vector field $C$ to
$$
\Cbar=\Phi C+f_\ast(Z),
$$
where $Z$ is a tangent vector field on $M$ and $\Phi$ is a nowhere vanishing function on $M$. Then the affine fundamental form,
the induced connection, the transversal connection form, and the affine shape operator change as follows:
\begin{align*}
& \bar{h}=\frac{1}{\Phi}h,\\
& \bar{\nabla}_XY=\nabla_XY-\frac{1}{\Phi}h(X,Y)Z,\\
& \bar{\tau}=\tau+\frac{1}{\Phi}h(Z,\cdot)+d\ln|\Phi|,\\
& \bar{S}=\Phi S-\nabla_{\cdot}Z+\bar{\tau}(\cdot)Z.
\end{align*}
\end{thm}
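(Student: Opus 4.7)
The plan is a direct computation: substitute the decomposition $\Cbar=\Phi C+f_\ast(Z)$ into the Gauss and Weingarten formulas \eqref{eq::FormulaGaussa}--\eqref{eq::FormulaWeingartena} written for $\Cbar$, expand using the corresponding formulas for the original transversal field $C$, and then use the uniqueness of the decomposition $T_{f(p)}\R^{n+1}=f_\ast(T_pM)\oplus \Span(C)$ to separately match the tangential and transversal parts. The assumption that $\Phi$ is nowhere vanishing is exactly what guarantees that $\Cbar$ is still transversal, so this decomposition is meaningful.

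\textbf{Step 1 (Gauss side).} First I would write, using \eqref{eq::FormulaGaussa} for $\Cbar$,
\[
\D_Xf_\ast Y=f_\ast(\bar\nabla_XY)+\bar h(X,Y)\Cbar = f_\ast\bigl(\bar\nabla_XY+\bar h(X,Y)Z\bigr)+\Phi\,\bar h(X,Y)\,C,
\]
and compare with $\D_Xf_\ast Y=f_\ast(\nabla_XY)+h(X,Y)C$. Equating the transversal components in the $C$-direction gives $\bar h=h/\Phi$; equating the tangential components then yields $\bar\nabla_XY=\nabla_XY-\frac{1}{\Phi}h(X,Y)Z$.

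\textbf{Step 2 (Weingarten side).} Next I would differentiate $\Cbar=\Phi C+f_\ast(Z)$ directly with $\D$, using \eqref{eq::FormulaGaussa} for $f_\ast Z$ and \eqref{eq::FormulaWeingartena} for $C$:
\[
\D_X\Cbar = X(\Phi)\,C+\Phi\bigl(-f_\ast(SX)+\tau(X)C\bigr)+f_\ast(\nabla_XZ)+h(X,Z)\,C.
\]
Grouping terms produces a transversal coefficient $X(\Phi)+\Phi\tau(X)+h(X,Z)$ and a tangential part $f_\ast(-\Phi SX+\nabla_XZ)$. I would then set this equal to $-f_\ast(\bar SX)+\bar\tau(X)\Cbar=f_\ast\bigl(-\bar SX+\bar\tau(X)Z\bigr)+\Phi\bar\tau(X)C$ and match components: the transversal match, after dividing by $\Phi$ and recognizing $X(\Phi)/\Phi=d\ln|\Phi|(X)$, gives the formula for $\bar\tau$; substituting this $\bar\tau$ into the tangential match gives $\bar S=\Phi S-\nabla_{\cdot}Z+\bar\tau(\cdot)Z$.

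There is no real obstacle here; the argument is bookkeeping, and the only subtlety worth flagging is that in both steps the decomposition into tangential and transversal components relies on the transversality of $C$ (guaranteed by $\Phi\neq 0$) and on $f_\ast$ being injective (pointwise, from the hypersurface immersion property). Everything else is an application of the Leibniz rule for the flat connection $\D$ and the two structural equations \eqref{eq::FormulaGaussa}--\eqref{eq::FormulaWeingartena}.
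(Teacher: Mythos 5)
Your proof is correct. The paper itself gives no proof of this theorem — it is quoted from Nomizu--Sasaki [NS] — and your computation (substituting $\Cbar=\Phi C+f_\ast Z$ into the Gauss and Weingarten formulas and matching tangential and transversal components, which is legitimate precisely because $\Phi\neq 0$ keeps $\Cbar$ transversal) is the standard argument given there.
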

\par We have the following
\begin{thm}[\cite{NS}, Fundamental equations]\label{tw::FundamentalEquations}
For an arbitrary transversal vector field $C$ the induced
connection $\nabla$, the second fundamental form $h$, the shape
operator $S$, and the 1-form $\tau$ satisfy
the following equations:
\begin{align}
\label{eq::Gauss}&R(X,Y)Z=h(Y,Z)SX-h(X,Z)SY,\\
\label{eq::Codazzih}&(\nabla_X h)(Y,Z)+\tau(X)h(Y,Z)=(\nabla_Y h)(X,Z)+\tau(Y)h(X,Z),\\
\label{eq::CodazziS}&(\nabla_X S)(Y)-\tau(X)SY=(\nabla_Y S)(X)-\tau(Y)SX,\\
\label{eq::Ricci}&h(X,SY)-h(SX,Y)=2d\tau(X,Y).
\end{align}
\end{thm}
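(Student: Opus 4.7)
The plan is to exploit the flatness of the ambient connection $\D$ on $\R^{n+1}$: the curvature tensor of $\D$ vanishes identically, so for any smooth vector field $W$ defined along $f$,
\[
\D_X \D_Y W - \D_Y \D_X W - \D_{[X,Y]} W = 0.
\]
Applying this identity to $W = f_* Z$ will yield \eqref{eq::Gauss} and \eqref{eq::Codazzih}, and applying it to $W = C$ will yield \eqref{eq::CodazziS} and \eqref{eq::Ricci}. In each case the left-hand side is expanded via \eqref{eq::FormulaGaussa} and \eqref{eq::FormulaWeingartena}, and then split into its tangential part (a section of $f_* TM$) and its transversal part (a multiple of $C$); these two parts must vanish separately.

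For $W = f_* Z$ one differentiates once using \eqref{eq::FormulaGaussa}, then differentiates again, invoking \eqref{eq::FormulaGaussa} on the tangential piece and \eqref{eq::FormulaWeingartena} on the $C$-piece. The tangential component of the symmetrised identity collapses to $R(X,Y)Z - h(Y,Z)SX + h(X,Z)SY = 0$, which is the Gauss equation \eqref{eq::Gauss}. The transversal component involves scalar derivatives such as $X(h(Y,Z))$; rewriting them via the defining identity $(\nabla_X h)(Y,Z) = X(h(Y,Z)) - h(\nabla_X Y, Z) - h(Y, \nabla_X Z)$, and using torsion-freeness of $\nabla$ to pair $\nabla_X Y - \nabla_Y X$ with $[X,Y]$, the remaining terms reduce exactly to \eqref{eq::Codazzih}.

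For $W = C$ the procedure is analogous: one first applies \eqref{eq::FormulaWeingartena}, then differentiates again, invoking \eqref{eq::FormulaGaussa} on $-f_*(SY)$. Writing $\nabla_X(SY) = (\nabla_X S)(Y) + S(\nabla_X Y)$ and again using $\nabla_X Y - \nabla_Y X = [X,Y]$, the tangential part becomes \eqref{eq::CodazziS}, while the transversal part reduces to $h(Y,SX) - h(X,SY) + X\tau(Y) - Y\tau(X) - \tau([X,Y]) = 0$; with the standard convention $d\tau(X,Y) = \tfrac12\bigl(X\tau(Y) - Y\tau(X) - \tau([X,Y])\bigr)$ this is exactly \eqref{eq::Ricci}. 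The whole argument is bookkeeping, with no analytic input beyond $R^{\D}=0$ and the two structural equations; the main obstacle is keeping tangential and transversal contributions rigorously separated at each differentiation and invoking the torsion-freeness of $\nabla$ at precisely the right moments.
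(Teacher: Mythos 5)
The proposal is correct and is precisely the standard derivation: the paper gives no proof, citing Nomizu--Sasaki, and the argument there is exactly this one --- expand the flatness identity $\D_X\D_Y W-\D_Y\D_X W-\D_{[X,Y]}W=0$ for $W=f_*Z$ and $W=C$ via the Gauss and Weingarten formulas and split into tangential and transversal components. Your sign bookkeeping and the convention $d\tau(X,Y)=\tfrac12\bigl(X\tau(Y)-Y\tau(X)-\tau([X,Y])\bigr)$ both check out against the stated equations.
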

The equations (\ref{eq::Gauss}), (\ref{eq::Codazzih}),
(\ref{eq::CodazziS}), and (\ref{eq::Ricci}) are called the
equation of Gauss, Codazzi for $h$, Codazzi for $S$ and Ricci,
respectively.
\par For a hypersurface immersion $f\colon M\rightarrow \R^{n+1}$
a transversal vector field $C$ is said to be \emph{equiaffine}
(resp. \emph{locally equiaffine}) if $\tau=0$ (resp. $d\tau=0$).
For an affine hypersurface $f\colon M\rightarrow \R^{n+1}$ with the transversal vector field $C$ we consider the following volume element on $M$:
$$
\Theta(X_1,\ldots,X_n):=\det[f_\ast X_1,\ldots,f_\ast X_n,C]
$$ for all $X_1,\ldots ,X_n\in \X(M) $. We call $\Theta $ \emph{the induced volume element} on $M$.
\par Immersion $f\colon M\rightarrow \mathbb{R}^{n+1}$ is said to be a \emph{centro-affine hypersurface} if the position vector $x$ (from origin $o$) for each point $x\in M$ is transversal to the tangent plane of $M$ at $x$. In this case $S=\id$ and $\tau=0$.
\par If $h$ is nondegenerate (that is $h$ defines a semi-Rie\-man\-nian metric on $M$), we say that the hypersurface or the
hypersurface immersion is \emph{nondegenerate}. In this paper we always assume that $f$ is nondegenerate.
\par When $f$ is nondegenerate, there exists a canonical transversal vector field $C$ called \emph{the affine normal field} (or \emph{the Blaschke field}).
The affine normal field is uniquely determined up to sign by the following conditions:
\begin{enumerate}
 \item the induced volume form $\Theta$ is $\nabla$-parallel (i.e. $\tau=0$),
 \item the metric volume form $\omega_h$ of $h$ coincides with the induced volume form $\Theta$.
\end{enumerate} Recall that  $\omega_h$ is defined by
$$\omega_h(X_1,\ldots,X_n)=|\det[h(X_i,X_j)]|^{1/2},$$ where $\{X_1,\ldots ,X_n\}$ is any positively oriented basis relative to the induced volume form $\Theta$. The affine immersion $f$ with a Blaschke field $C$ is called \emph{ a Blaschke hypersurface}.
In this case fundamental equations can be rewritten as follows
\begin{thm}[\cite{NS}, Fundamental equations]\label{tw::FundamentalEquationsBlaschke}
For a Blaschke hypersurface $f$, we have the following fundamental equations:
\begin{align*}
&R(X,Y)Z=h(Y,Z)SX-h(X,Z)SY,\\
&(\nabla_X h)(Y,Z)=(\nabla_Y h)(X,Z),\\
&(\nabla_X S)(Y)=(\nabla_Y S)(X),\\
&h(X,SY)=h(SX,Y).
\end{align*}
\end{thm}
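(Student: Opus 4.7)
The plan is to derive these equations as a direct specialization of the general fundamental equations in Theorem \ref{tw::FundamentalEquations} using the defining property of the Blaschke field. First I would observe that, by definition, the Blaschke field $C$ is characterized (up to sign) by the two conditions recalled just above Theorem \ref{tw::FundamentalEquationsBlaschke}, the first of which is precisely that the induced volume form $\Theta$ is $\nabla$-parallel, i.e.\ $\tau = 0$. Hence for a Blaschke hypersurface the transversal connection form vanishes identically on $M$.

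Next I would substitute $\tau \equiv 0$ into each of the four identities from Theorem \ref{tw::FundamentalEquations}. The Gauss equation contains no $\tau$-term and is carried over unchanged. The Codazzi equation for $h$, originally $(\nabla_X h)(Y,Z) + \tau(X) h(Y,Z) = (\nabla_Y h)(X,Z) + \tau(Y) h(X,Z)$, collapses to the symmetry $(\nabla_X h)(Y,Z) = (\nabla_Y h)(X,Z)$. The Codazzi equation for $S$ similarly loses its $\tau(X) SY$ and $\tau(Y) SX$ terms, reducing to $(\nabla_X S)(Y) = (\nabla_Y S)(X)$. Finally, since $\tau \equiv 0$ trivially implies $d\tau = 0$, the Ricci equation $h(X,SY) - h(SX,Y) = 2 d\tau(X,Y)$ becomes $h(X,SY) = h(SX,Y)$.

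There is effectively no obstacle: the statement is a routine corollary, and the only content is the identification $\tau = 0$, which is part of the very definition of a Blaschke field recalled in the preceding paragraph of the excerpt. No further calculation or geometric input is required beyond quoting Theorem \ref{tw::FundamentalEquations}.
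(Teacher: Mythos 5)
Your proposal is correct and is exactly the intended derivation: the paper quotes this theorem from \cite{NS} without proof, and the content is precisely that a Blaschke field is equiaffine ($\tau=0$), so substituting $\tau\equiv 0$ (hence $d\tau=0$) into the four identities of Theorem \ref{tw::FundamentalEquations} yields the stated equations. No gaps.
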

A Blaschke hypersurface is called \emph{an affine hypersphere} if $S=\lambda \id$, where $\lambda =\const.$\\
If $\lambda =0$ $f$ is called \emph{an improper affine hypersphere}, if $\lambda \neq 0$ a hypersurface $f$ is called \emph{a proper affine hypersphere.}

\section{Induced almost paracontact structures}
\par A $(2n+1)$-dimensional manifold $M$ is said to have an
\emph{almost paracontact structure} if there exist on $M$ a tensor
field $\varphi$ of type (1,1), a vector field $\xi$ and a 1-form
$\eta$ which satisfy
\begin{align}
\varphi^2(X)&=X-\eta(X)\xi,\\
\eta(\xi)&=1
\end{align}
for every $X\in TM$
and the tensor field $\varphi$ induces an almost para-complex structure on the distribution $\DD=\operatorname{ker}\eta$. That is the eigendistributions $\DD ^{+},\DD ^{-}$ corresponding to the eigenvalues $1,-1$
of $\varphi$ have equal dimension $n$.
\par Let $\dim M=2n+1$ and $f\colon M\rightarrow \R^{2n+2}$ be
a nondegenerate (relative to the second fundamental form) affine hypersurface. We always assume that $\R^{2n+2}$ is endowed with
the standard para-complex structure $\widetilde{J}$ given by
$$
\widetilde{J}(x_1,\ldots,x_{n+1},y_1,\ldots,y_{n+1}):=(y_1,\ldots,y_{n+1},x_1,\ldots,x_{n+1}).
$$
Let $C$ be a transversal vector field on $M$. We say that $C$ is
\emph{$\widetilde{J}$-tangent} if $\widetilde{J}C_x\in f_\ast(T_xM)$ for every $x\in M$.
We also define a distribution $\DD$ on $M$ as the biggest $\widetilde{J}$-invariant distribution on $M$, that is
$$
\DD_x=f_\ast^{-1}(f_\ast(T_xM)\cap \widetilde{J}(f_\ast(T_xM)))
$$
for every $x\in M$. We have that $\dim\DD _x\geq 2n$. If for some $x$ the $\dim\DD _x=2n+1$ then $\DD _x=T_xM$ and it is not possible to find a $\widetilde{J}$-tangent transversal vector field in a neighbourhood of $x$. Since we only study hypersurfaces with a $\widetilde{J}$-tangent transversal vector field, then we always have $\dim\DD=2n$. The distribution $\DD$ is smooth as an intersection of two smooth distributions and because $\dim \DD$ is constant.  A vector field $X$ is called a \emph{$\DD$-field}
if $X_x\in\DD_x$ for every $x\in M$. We use the notation $X\in\DD$ for vectors as well as for $\DD$-fields.

We say that the distribution $\DD$ is nondegenerate if
$h$ is nondegenerate on $\DD$.
\par To simplify the writing,
we will be omitting $f_\ast$ in front of vector fields in most cases.
\par Let $f\colon M\rightarrow \R^{2n+2}$ be a nondegenerate affine
hypersurface with a $\widetilde{J}$-tangent transversal vector field $C$. Then
we can define a vector field $\xi$, a 1-form $\eta$ and a tensor field
$\varphi$ of type (1,1) as follows:
\begin{align}
\label{xi::eq::0}&\xi:=\widetilde{J}C;\\
\label{etanaD::eq::0}&\eta|_\DD=0 \text{ and } \eta(\xi)=1; \\
\label{phi::eq::0}&\varphi|_\DD=\widetilde{J}|_\DD \text{ and } \varphi(\xi)=0.
\end{align}
It is easy to see that $(\varphi,\xi,\eta)$ is an almost paracontact
structure on $M$. This structure is called the \emph{induced almost
paracontact structure}.
%%%%%%%%%%%%%%%%
%%    LEMMA    %
%%%%%%%%%%%%%%%%
\begin{lem}\label{lm::ZmianaStrukturyPrawieparakontaktowej}
Let $C$ be a $\widetilde{J}$-tangent transversal vector field. Then any other $\widetilde{J}$-tangent transversal vector field $\bar{C}$ has a form:
$$
\bar{C}=\phi C+f_\ast Z,
$$
where $\phi\neq 0$ and $Z\in\DD$. Moreover, if
$(\varphi,\xi,\eta)$ is an almost paracontact structure induced by $C$, then $\bar{C}$ induces an almost paracontact structure
$(\bar\varphi,\bar\xi,\bar\eta)$, where
\begin{align}\label{eq::ChangeOfPhiXiEta}
\begin{cases}
\bar\xi=\phi\xi+\varphi Z,\\
\bar\eta=\frac{1}{\phi}\eta,\\
\bar\varphi=\varphi-\eta(\cdot)\frac{1}{\phi}Z.
\end{cases}
\end{align}
\end{lem}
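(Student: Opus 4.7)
The plan is to start from the general change-of-transversal formula $\bar{C} = \phi C + f_\ast Z$ (with nowhere-zero $\phi$ and tangent $Z$) already recorded in Theorem \ref{tw::ChangeOfTransversalField}, then use $\Jt$-tangency of both $C$ and $\bar C$ to force $Z$ into $\DD$, and finally compute $(\bar\varphi,\bar\xi,\bar\eta)$ directly from the defining conditions \eqref{xi::eq::0}--\eqref{phi::eq::0}.

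First I would apply $\Jt$ to both sides of $\bar{C} = \phi C + f_\ast Z$. Since $\Jt C = f_\ast \xi$ and $\Jt \bar C \in f_\ast(TM)$ by $\Jt$-tangency, this yields $\Jt f_\ast Z = \Jt \bar C - \phi f_\ast \xi \in f_\ast(TM)$. Combined with $f_\ast Z \in f_\ast(TM)$, this gives $f_\ast Z \in f_\ast(TM) \cap \Jt(f_\ast(TM))$, which is exactly $f_\ast \DD$; so $Z \in \DD$ and $\phi \neq 0$ inherits from the change-of-transversal formula.

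Next I would compute $\bar\xi := \Jt \bar C = \phi\,\Jt C + \Jt f_\ast Z = \phi \xi + \varphi Z$, using $\varphi|_\DD = \Jt|_\DD$ on $Z \in \DD$. Because $\DD$ is defined intrinsically from $f$ and $\Jt$, it is unaffected by the change of $C$, so $\bar\eta$ must still vanish on $\DD$. Combining this with the normalization $\bar\eta(\bar\xi) = 1$ and observing that $\varphi Z \in \DD$ forces $\phi\,\bar\eta(\xi) = 1$, hence $\bar\eta = \tfrac{1}{\phi}\eta$.

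Finally, for $\bar\varphi$ I would decompose an arbitrary $X$ as $X_\DD + \eta(X)\xi$. On the $\DD$-summand, both $\bar\varphi$ and $\varphi$ coincide with $\Jt$, so they agree. To pin down $\bar\varphi(\xi)$, I would apply $\bar\varphi$ to $\bar\xi = \phi\xi + \varphi Z$ and exploit $\bar\varphi(\bar\xi) = 0$ together with $\bar\varphi(\varphi Z) = \Jt(\varphi Z) = \Jt^{2} Z = Z$ (using $\varphi Z \in \DD$ and $\Jt^2 = \id$); this yields $\bar\varphi(\xi) = -\tfrac{1}{\phi} Z$. Packaging the two cases into one linear formula gives precisely $\bar\varphi = \varphi - \tfrac{1}{\phi}\eta(\cdot)Z$. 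The only delicate bit will be the bookkeeping—keeping straight which component of a vector lies in $\DD$ versus along $\xi$ and invoking $\Jt^2 = \id$ at the right moment—but once $Z \in \DD$ is established everything reduces to linear algebra on the splitting $TM = \DD \oplus \R\xi$.
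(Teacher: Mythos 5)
Your proposal is correct and follows essentially the same route as the paper: invoke Theorem \ref{tw::ChangeOfTransversalField} for the general form of $\bar C$, use $\Jt$-tangency of both fields to place $Z$ in $\DD$, and derive \eqref{eq::ChangeOfPhiXiEta} directly from the defining conditions \eqref{xi::eq::0}--\eqref{phi::eq::0}. The paper simply states the last step as immediate, whereas you spell out the linear algebra on the splitting $TM=\DD\oplus\R\xi$; the details you give are accurate.
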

\begin{proof}
By Theorem \ref{tw::ChangeOfTransversalField} we have that $\bar{C}=\phi C+f_\ast Z$ where $\phi\neq 0$ and $Z\in\XM$.
Since both $C$ and $\bar{C}$ are $\Jt$-tangent then in particular $\Jt f_\ast Z\in f_\ast TM$ that is $Z\in\DD$.
Formulas (\ref{eq::ChangeOfPhiXiEta}) are immediate consequence of (\ref{xi::eq::0})--(\ref{phi::eq::0}).
\end{proof}
For an induced almost paracontact structure we have the following theorem
\begin{thm}[\cite{SZZ}]\label{tw::Wzory}
Let $f\colon M\rightarrow \mathbb{R}^{2n+2}$ be an affine hypersurface with a $\widetilde{J}$-tangent transversal vector field $C$.
If $(\varphi,\xi,\eta)$ is an induced almost paracontact structure on $M$
then the following equations hold:
\begin{align}
\label{Wzory::eq::1}&\eta(\nabla_XY)=h(X,\varphi Y)+X(\eta(Y))+\eta(Y)\tau(X),\\
\label{Wzory::eq::2}&\varphi(\nabla_XY)=\nabla_X\varphi Y-\eta(Y)SX-h(X,Y)\xi,\\
\label{Wzory::eq::3}&\eta([X,Y])=h(X,\varphi Y)-h(Y,\varphi X)+X(\eta(Y))-Y(\eta(X))\\
\nonumber &\qquad\qquad\quad+\eta(Y)\tau(X)-\eta(X)\tau(Y),\\
\label{Wzory::eq::4}&\varphi([X,Y])=\nabla_X\varphi Y-\nabla_Y\varphi X+\eta(X)SY-\eta(Y)SX,\\
\label{Wzory::eq::5}&\eta(\nabla_X\xi)=\tau(X),\\
\label{Wzory::eq::6}&\eta(SX)=-h(X,\xi)
\end{align}
for every $X,Y\in \X(M)$.
\end{thm}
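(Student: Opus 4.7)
The plan is to derive all six formulas from a single master identity
\begin{equation*}
\Jt f_\ast Y = f_\ast\varphi Y + \eta(Y)\,C \qquad\text{for every } Y\in\XM,
\end{equation*}
which itself comes straight from the defining relations (\ref{xi::eq::0})--(\ref{phi::eq::0}). To verify it I would decompose $Y=(Y-\eta(Y)\xi)+\eta(Y)\xi$, where $Y-\eta(Y)\xi\in\DD$, and then use that $\Jt|_{f_\ast\DD}$ agrees with $f_\ast\varphi|_\DD$, together with $\Jt f_\ast\xi=\Jt^2 C=C$ and $\varphi\xi=0$.

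Next I would apply the flat connection $\D$ to this identity and separate tangential and transversal parts. Since $\Jt$ is $\D$-parallel, the left-hand side, expanded by the Gauss formula (\ref{eq::FormulaGaussa}) and the master identity applied a second time to $\Jt f_\ast\nabla_X Y$, becomes $f_\ast[\varphi\nabla_X Y + h(X,Y)\xi] + \eta(\nabla_X Y)\,C$. The right-hand side, expanded by Gauss on $\varphi Y$ and Weingarten (\ref{eq::FormulaWeingartena}) on $\D_X C$, becomes $f_\ast[\nabla_X\varphi Y - \eta(Y)SX] + [h(X,\varphi Y)+X(\eta(Y))+\eta(Y)\tau(X)]\,C$. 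Matching tangential components then yields (\ref{Wzory::eq::2}); matching transversal components yields (\ref{Wzory::eq::1}).

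Formulas (\ref{Wzory::eq::3}) and (\ref{Wzory::eq::4}) will then come for free by antisymmetrising (\ref{Wzory::eq::1}) and (\ref{Wzory::eq::2}) in $X,Y$ and invoking torsion-freeness, $[X,Y]=\nabla_X Y-\nabla_Y X$. Formula (\ref{Wzory::eq::5}) falls out by setting $Y=\xi$ in (\ref{Wzory::eq::1}), using $\eta(\xi)=1$ and $\varphi\xi=0$. For (\ref{Wzory::eq::6}) the cleanest route is to differentiate $f_\ast\xi=\Jt C$ directly: Weingarten gives $\D_X(\Jt C)=-\Jt f_\ast SX+\tau(X)\Jt C$, which the master identity rewrites as $f_\ast(-\varphi SX+\tau(X)\xi)-\eta(SX)\,C$, and matching with $\D_X f_\ast\xi = f_\ast\nabla_X\xi + h(X,\xi)\,C$ reads off $\eta(SX)=-h(X,\xi)$.

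I do not expect a serious obstacle: the whole argument is bookkeeping of tangential versus transversal components once the master identity is in hand. The only point needing care is exploiting $\Jt^2=\id$, so that the $\xi$-part of $f_\ast Y$ contributes a \emph{transversal} term under $\Jt$ — this is precisely what produces the extra $\eta(Y)\,C$ in the master identity and, in turn, all the $\eta$- and $\tau$-corrections appearing throughout (\ref{Wzory::eq::1})--(\ref{Wzory::eq::6}).
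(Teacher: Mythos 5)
Your proof is correct: the master identity $\Jt f_\ast Y=f_\ast\varphi Y+\eta(Y)C$ follows from the defining relations exactly as you say, and differentiating it (plus the Gauss and Weingarten formulas and torsion-freeness) yields all six identities; note the paper itself quotes this theorem from the reference without proof, and your derivation is the standard one that the cited source follows.
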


\par We conclude this section with the following useful lemma related to differential equations
\begin{lem}(\cite{SZ3})\label{lm::Differential-Equation}
Let $F\colon I\rightarrow\R^{2n}$ be a smooth function on an interval $I$.
If $F$ satisfies the differential equation
\begin{equation}\label{eq::RR}
F'(z)=-\widetilde{J}F(z),
\end{equation}
then $F$ is of the form
\begin{equation}\label{eq::RR-solution}
F(z)=\widetilde{J}v\cosh z-v\sinh z,
\end{equation}
where $v\in\R^{2n}$.
\end{lem}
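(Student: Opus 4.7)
The plan is to reduce the first-order vector ODE to a second-order decoupled ODE by using the fact that $\Jt$ is an involution, solve that, and then pin down the integration constants using the original first-order equation evaluated at $z=0$.

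First, I would differentiate the hypothesis $F'(z)=-\Jt F(z)$ once more in $z$. Since $\Jt$ is a constant linear map with $\Jt^2=\id$, this yields
\begin{equation*}
F''(z)=-\Jt F'(z)=\Jt^2 F(z)=F(z).
\end{equation*}
Thus each component of $F$ satisfies the scalar equation $u''=u$, so there exist vectors $A,B\in\R^{2n}$ with
\begin{equation*}
F(z)=A\cosh z+B\sinh z,\qquad F'(z)=A\sinh z+B\cosh z.
\end{equation*}

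Next I would use the original first-order equation to cut the solution space in half. Evaluating $F'(z)=-\Jt F(z)$ at $z=0$ gives $B=-\Jt A$. (Alternatively, comparing the coefficients of $\cosh z$ and $\sinh z$ in $F'(z)=-\Jt F(z)$ gives the two equivalent relations $B=-\Jt A$ and $A=-\Jt B$, which collapse to one via $\Jt^2=\id$.) Hence
\begin{equation*}
F(z)=A\cosh z-\Jt A\,\sinh z.
\end{equation*}

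Finally, setting $v:=\Jt A$ (so that $A=\Jt v$, again using $\Jt^2=\id$) I obtain the desired form
\begin{equation*}
F(z)=\Jt v\cosh z-v\sinh z,
\end{equation*}
with $v\in\R^{2n}$ given explicitly by $v=\Jt F(0)$. There is no real obstacle here; the only point to watch is to verify carefully that the formal general solution of $F''=F$ is compatible with the stronger first-order constraint, which is precisely what the identity $B=-\Jt A$ enforces.
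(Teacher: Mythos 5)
Your proof is correct, and it is the standard argument for this lemma (which the paper only cites from [SZ3] without reproducing a proof): differentiate to get $F''=F$ via $\Jt^2=\id$, write the general solution as $A\cosh z+B\sinh z$, and impose the first-order equation to force $B=-\Jt A$. The only small point to tidy is that the interval $I$ need not contain $0$, so the evaluation ``at $z=0$'' (and the formula $v=\Jt F(0)$) is not literally available in general; but your parenthetical alternative --- comparing the coefficients of $\cosh z$ and $\sinh z$, which are linearly independent functions on any interval --- already handles this, so the argument stands as written.
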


\section{Centro-affine hypersurfaces with a $\Jt$-tangent centro-affine vector field}

Let $f\colon M\rightarrow \R^{n+1}$ be a nondegenerate affine hypersurface. A $1$-dimensional, smooth distribution $\mathcal{A}\subset TM$ we call \emph{a null-direction for $f$} if for every $x\in M$ and for every $v\in \mathcal{A}_x$ we have $h_x(v,v)=0$.

%%%%%%%%%%%%%%%%
%%    LEMMA    %
%%%%%%%%%%%%%%%%
\begin{lem}\label{lm::NullDirDplus}
 Let $f\colon M\mapsto \R^4$ be a nondegenerate centro-affine hypersurface with a $\Jt$-tangent centro-affine vector field $C$. If $\DD ^+$ is the null-direction for $f$, then for every point $p\in M$ there exist an open neighbourhood $U$ of $p$ and a vector field $X\in \DD^+$, $X\neq 0$ defined on $U$ such that
\begin{align}
\label{eq::NullDirDplus::1}h(X,X)=0,\\
\label{eq::NullDirDplus::2}\nabla _X\xi=\nabla _{\xi}X=-X,\\
\label{eq::NullDirDplus::3}\nabla _XX=0.
\end{align}
\end{lem}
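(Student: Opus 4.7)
The plan is to first show that $\nabla_X\xi = -X$ holds automatically for any nonzero local section $X$ of $\DD^+$, and then to rescale suitably so as to secure the remaining equalities $\nabla_\xi X = -X$ and $\nabla_X X = 0$.

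Pick any nowhere-vanishing local section $Y_0$ of $\DD^+$, so that $\varphi Y_0 = Y_0$ and $\eta(Y_0)=0$. Since $C$ is centro-affine, $S=\id$ and $\tau=0$. From (\ref{Wzory::eq::6}) we get $h(Y_0,\xi) = -\eta(Y_0) = 0$, and the null-direction hypothesis gives $h(Y_0,Y_0) = 0$, i.e.\ (\ref{eq::NullDirDplus::1}). Feeding these into (\ref{Wzory::eq::2}) with $Y=\xi$ produces $\varphi(\nabla_{Y_0}\xi) = -Y_0$, while (\ref{Wzory::eq::5}) gives $\eta(\nabla_{Y_0}\xi) = 0$; together with the decomposition $\DD = \DD^+ \oplus \DD^-$, this forces $\nabla_{Y_0}\xi = -Y_0$. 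Analogously, (\ref{Wzory::eq::1}) and (\ref{Wzory::eq::2}) show $\nabla_\xi Y_0,\,\nabla_{Y_0}Y_0 \in \DD^+$, so there are smooth functions $f,h_1$ on a neighborhood of $p$ with $\nabla_\xi Y_0 = f\,Y_0$ and $\nabla_{Y_0}Y_0 = h_1\,Y_0$.

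Now set $X := g\,Y_0$ for a smooth nowhere-zero $g$ to be determined. The identity $\nabla_X\xi = -X$ survives any such rescaling, and expanding via the Leibniz rule converts the two remaining conditions into the first-order system
\begin{equation*}
\xi(\ln|g|) = -(1+f), \qquad Y_0(\ln|g|) = -h_1.
\end{equation*}
Using $[\xi,Y_0] = \nabla_\xi Y_0 - \nabla_{Y_0}\xi = (f+1)Y_0$, the Frobenius-type compatibility of this pair reduces to the single scalar identity
\begin{equation*}
\xi(h_1) = Y_0(f) + (f+1)h_1.
\end{equation*}

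The main obstacle is showing that this compatibility identity holds automatically, and this is exactly where the Gauss equation (\ref{eq::Gauss}) intervenes. With $S=\id$ and the vanishings above one has $R(Y_0,\xi)Y_0 = h(\xi,Y_0)Y_0 - h(Y_0,Y_0)\xi = 0$; expanding $R(Y_0,\xi)Y_0$ via the definition of curvature and substituting $\nabla_\xi Y_0 = f Y_0$, $\nabla_{Y_0}Y_0 = h_1 Y_0$, $[Y_0,\xi] = -(f+1)Y_0$ produces precisely the required identity. With compatibility in hand, the standard theory of compatible first-order PDE systems yields a local solution: solve the linear ODE $\xi(\ln|g|)=-(1+f)$ along an integral curve of $\xi$ through $p$, then extend by the ODE $Y_0(\ln|g|)=-h_1$ along $Y_0$-orbits on a slice transverse to $\xi$; compatibility ensures the $Y_0$-equation is preserved by the $\xi$-flow, so the two-step construction is consistent on some neighborhood $U$ of $p$. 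The resulting $X := g\,Y_0$ then satisfies (\ref{eq::NullDirDplus::1})--(\ref{eq::NullDirDplus::3}).
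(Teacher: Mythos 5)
Your proof is correct and follows essentially the same route as the paper: the formulas of Theorem \ref{tw::Wzory} with $S=\id$, $\tau=0$ pin down $\nabla_{Y_0}\xi=-Y_0$ and show $\nabla_\xi Y_0,\nabla_{Y_0}Y_0\in\DD^+$, and the Gauss equation $R(Y_0,\xi)Y_0=0$ supplies exactly the integrability needed to rescale. The only (cosmetic) difference is that the paper performs two successive rescalings, applying Gauss after the first to get $\xi(a)=0$, whereas you solve the two scaling equations simultaneously as one compatible overdetermined system.
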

\begin{proof}
Let $p\in M$ and let $X''$ be a vector field defined on some neighbourhood $U$ of $p$ such that $X''\in \DD^+$, $X''\neq 0$. Since $\DD^+$ is the null-direction for $f$ we have $h(X'',X'')=0$. Now from the formulas (\ref{Wzory::eq::1}), (\ref{Wzory::eq::6}) and due to the fact that  $S=\id$ we get $h(X'',\xi)=0$, $\eta (\nabla_{X''}\xi)=0$, $\eta (\nabla_{\xi}X'')=0$. That is $\nabla_ {X''}\xi, \nabla _{\xi}X''\in\DD$.
Additionally, from (\ref{Wzory::eq::2}) we obtain
$\varphi(\nabla _{X''}\xi)=-X''$
and
$\varphi(\nabla _{\xi}X'')=\nabla _{\xi}\varphi X''=\nabla _{\xi}X''$
since $\varphi X''=X''$. In particular we have $\nabla _{\xi}X''\in\DD^+$.
The above implies that
$$
\nabla _{X''}\xi = -X''
$$
and
$$
\nabla _\xi X'' = \alpha X'',
$$
where $\alpha $ is some smooth function on $U$. We claim that in a neighbourhood of $p$ there exists $X'\in\DD^+,X'\neq 0$ such that
\begin{align}
\label{eq::hXprimXprim} h(X',X')=0,\\
\label{eq::etaXprim}\nabla_{X'}\xi=\nabla_{\xi}X'=-X'.
\end{align}
Indeed, let $X':=\beta X''$, where $\beta \neq 0$, is a solution of the following differential equation $\xi (\beta)=-(\alpha +1)\beta$. Shrinking eventually $U$ if needed, we may assume that $X'$ is defined on $U$. It is easy to verify that $X'$ satisfies (\ref{eq::hXprimXprim}) and (\ref{eq::etaXprim}). By (\ref{Wzory::eq::1}) and (\ref{Wzory::eq::6}) we get $\nabla_{X'}X'=aX'$ for some smooth function $a$. Moreover, from the Gauss equation we have
\begin{align*}
R(\xi,X')X'=0=\nabla_{\xi}(aX')-\nabla_{X'}(-X')=\xi (a)X',
\end{align*} that is $\xi (a)=0$.
Let $X:=bX'$, where $b\neq 0$, is a solution of the following differential equation $X'(b)=-ab$. Again shrinking $U$ if needed we may assume that $X$ is defined on $U$. Since $\xi (a)=0$ we also have that $\xi (b)=0$. Of course we have
\begin{align*}
X\neq 0, X\in \DD^+,\\
h(X,X)=0.
\end{align*}
Now, by straightforward computation we obtain
\begin{align*}
\nabla_XX=0,\quad \nabla_{\xi}X=\nabla_X{\xi}=-X.
\end{align*}
The proof is completed.
\end{proof}

When $\DD^-$ is the null-direction for $f$, one may easily obtain a result similar to Lemma \ref{lm::NullDirDplus}. Namely, we have
%%%%%%%%%%%%%%%%
%%    LEMMA    %
%%%%%%%%%%%%%%%%
\begin{lem}\label{lm::NullDirDminus}
 Let $f\colon M\mapsto \R^4$ be a nondegenerate centro-affine hypersurface with a $\Jt$-tangent centro-affine vector field $C$. If $\DD ^-$ is the null-direction for $f$, then for every point $p\in M$ there exist an open neighbourhood $U$ of $p$ and a vector field $Y\in \DD^-$, $Y\neq 0$ defined on $U$ such that
\begin{align}
\label{eq::NullDirDplus::1}h(Y,Y)=0,\\
\label{eq::NullDirDplus::2}\nabla _Y\xi=\nabla _{\xi}Y=Y,\\
\label{eq::NullDirDplus::3}\nabla _YY=0.
\end{align}
\end{lem}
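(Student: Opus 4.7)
The statement is exactly the mirror image of Lemma \ref{lm::NullDirDplus} with $\DD^+$ replaced by $\DD^-$ and the sign of $\nabla_Y\xi=\nabla_\xi Y$ flipped. Accordingly, my plan is to repeat the previous argument almost verbatim, carefully tracking signs that come from $\varphi Y''=-Y''$ for $Y''\in\DD^-$ (as opposed to $\varphi X''=X''$ used before).

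First I would pick an arbitrary $p\in M$ and any nonzero $Y''\in\DD^-$ defined on some neighbourhood $U$ of $p$. Since $\DD^-$ is the null-direction, $h(Y'',Y'')=0$. Using $\tau=0$ (centro-affine), $S=\id$, and equations (\ref{Wzory::eq::1}) and (\ref{Wzory::eq::6}), I would deduce $h(Y'',\xi)=0$, $\eta(\nabla_{Y''}\xi)=0$, and $\eta(\nabla_\xi Y'')=0$, so that both $\nabla_{Y''}\xi$ and $\nabla_\xi Y''$ lie in $\DD$. Next I would apply (\ref{Wzory::eq::2}) with $Y=\xi$ and $X=Y''$:
\begin{equation*}
\varphi(\nabla_{Y''}\xi)=\nabla_{Y''}\varphi\xi-\eta(\xi)SY''-h(Y'',\xi)\xi=-Y''.
\end{equation*}
Since $\nabla_{Y''}\xi\in\DD$ and $-Y''\in\DD^-$, decomposing along $\DD=\DD^+\oplus\DD^-$ and using $\varphi=\id$ on $\DD^+$, $\varphi=-\id$ on $\DD^-$, I get $\nabla_{Y''}\xi=Y''$. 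The same formula applied to $\nabla_\xi Y''$ gives $\varphi(\nabla_\xi Y'')=\nabla_\xi\varphi Y''=-\nabla_\xi Y''$, so $\nabla_\xi Y''\in\DD^-$, i.e.\ $\nabla_\xi Y''=\alpha Y''$ for some smooth function $\alpha$ on $U$.

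I would then rescale in two steps, exactly as in the proof of Lemma \ref{lm::NullDirDplus}. Put $Y'=\beta Y''$ with $\beta\neq 0$ chosen as a local solution of $\xi(\beta)=(1-\alpha)\beta$; by possibly shrinking $U$ this is available. A direct check shows $h(Y',Y')=0$ and $\nabla_{Y'}\xi=\nabla_\xi Y'=Y'$. Applying (\ref{Wzory::eq::2}) with $X=Y=Y'$ and using $\varphi Y'=-Y'$, $h(Y',Y')=0$, $\eta(Y')=0$ yields $\varphi(\nabla_{Y'}Y')=-\nabla_{Y'}Y'$, together with $\nabla_{Y'}Y'\in\DD$, hence $\nabla_{Y'}Y'=aY'$ for a smooth function $a$. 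Using $[\xi,Y']=\nabla_\xi Y'-\nabla_{Y'}\xi=0$ and the Gauss equation,
\begin{equation*}
0=R(\xi,Y')Y'=\nabla_\xi(aY')-\nabla_{Y'}(Y')=\xi(a)Y',
\end{equation*}
so $\xi(a)=0$.

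Finally I set $Y=bY'$ with $b\neq 0$ solving $Y'(b)=-ab$; choosing initial data constant along $\xi$ and exploiting $\xi(a)=0$ (together with $[\xi,Y']=0$, which lets me use flow-box coordinates in which both $a$ and $b$ depend only on the $Y'$-variable) ensures $\xi(b)=0$ as well. Routine computation then gives $\nabla_Y Y=0$ and $\nabla_Y\xi=\nabla_\xi Y=Y$, completing the proof. The only genuinely nontrivial point, as in Lemma \ref{lm::NullDirDplus}, is the simultaneous solvability of the two first-order conditions on $b$; this is where the bracket identity $[\xi,Y']=0$ is crucial, and it is the step I would single out as the main technical obstacle.
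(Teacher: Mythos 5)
Your proof is correct and is exactly the argument the paper intends: the paper omits the proof of this lemma, stating only that it is analogous to Lemma \ref{lm::NullDirDplus}, and your sign-tracking (in particular $\nabla_{Y''}\xi=Y''$, $\nabla_\xi Y''\in\DD^-$, and the modified ODE $\xi(\beta)=(1-\alpha)\beta$) carries that analogy through correctly. Your closing remark about using $[\xi,Y']=0$ and flow-box coordinates to get $\xi(b)=0$ is a legitimate filling-in of a detail the paper also glosses over in the $\DD^+$ case.
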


In order to simplify futher computation, we will need the following technical lemma.
%%%%%%%%%%%%%%%%
%%    LEMMA    %
%%%%%%%%%%%%%%%%
\begin{lem}\label{lm::DetRelation}
Let $g\colon V\rightarrow \R^4$ be a smooth function defined on an open subset $V$ of $\R^2$. Let $I\subset \R$ be an open interval in $\R$ and let function $f\colon V\times I\rightarrow \R^4$ be given by the formula
\begin{align*}
f(x,y,z)=\Jt g(x,y)\cosh z-g(x,y)\sinh z.
 \end{align*}
 Then
 \begin{align*}
 \det [f_x,f_y,f_z,f]=\det [g_x,g_y,g, \Jt g].
 \end{align*}
\end{lem}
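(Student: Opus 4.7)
The plan is to observe that all four vectors $f_x, f_y, f_z, f$ are the image of vectors from the set $\{g_x, g_y, g, \Jt g\}$ under a single linear endomorphism of $\R^4$, and then to compute the determinant of that endomorphism.

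Concretely, I introduce the linear map $L_z \colon \R^4 \to \R^4$ defined by
\[
L_z(v) := (\cosh z)\,\Jt v - (\sinh z)\, v.
\]
Since $L_z$ does not depend on $(x,y)$ and acts linearly on $\R^4$, we immediately get
$f = L_z(g)$, $f_x = L_z(g_x)$, and $f_y = L_z(g_y)$. Differentiating in $z$ and using $\Jt^2 = \id$, a one-line check shows $f_z = (\sinh z)\Jt g - (\cosh z)g = -L_z(\Jt g)$. So the four column vectors on the left are precisely $L_z$ applied to $g_x, g_y, -\Jt g, g$.

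Next I compute $\det L_z$. The cleanest route is to note that $\Jt$ is an involution of trace zero on $\R^4$, so it has eigenvalues $+1, +1, -1, -1$; hence $L_z = (\cosh z)\Jt - (\sinh z)\id$ has eigenvalues $\cosh z - \sinh z$ (twice) and $-\cosh z - \sinh z$ (twice), giving
\[
\det L_z = \bigl((\cosh z - \sinh z)(-\cosh z - \sinh z)\bigr)^2 = (\sinh^2 z - \cosh^2 z)^2 = 1.
\]
Alternatively one writes $L_z$ as a $2\times 2$ block matrix whose blocks are scalar multiples of $I_2$ (and hence commute), and applies the block-determinant formula $\det\begin{pmatrix}A&B\\C&D\end{pmatrix}=\det(AD-BC)$ to reach the same conclusion.

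Combining these ingredients, multilinearity of the determinant yields
\[
\det[f_x, f_y, f_z, f] = \det L_z \cdot \det[g_x, g_y, -\Jt g, g] = 1 \cdot \det[g_x, g_y, g, \Jt g],
\]
after swapping the last two columns (two sign flips cancel). No real obstacle is expected; the only point requiring care is keeping the signs straight when moving $-\Jt g$ into the correct position, which is why I deliberately factor out $L_z$ first and do the column swap at the end.
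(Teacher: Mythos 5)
Your proof is correct, and it is organized differently from the paper's. The paper proceeds by brute-force multilinear expansion: it first expands the last two columns $f_z$ and $f$ to reduce to $\det[f_x,f_y,\Jt g,g]$ via $\cosh^2 z-\sinh^2 z=1$, then expands $f_x$ and $f_y$ in the same way, and kills the cross terms using the auxiliary identity $\det[\Jt v_1,v_2,v_3,\Jt v_3]=-\det[v_1,\Jt v_2,v_3,\Jt v_3]$ (equation (\ref{eq::Detv1v2v3})), which the paper also reuses later in Theorem \ref{th::CAHwithDplus}. You instead package the whole computation into the single linear map $L_z=(\cosh z)\Jt-(\sinh z)\id$, observe that $[f_x,f_y,f_z,f]=L_z[g_x,g_y,-\Jt g,g]$, and compute $\det L_z=1$ from the eigenvalues $\pm1$ (each with multiplicity $2$) of the involution $\Jt$; all the individual identities I checked ($f_z=-L_z(\Jt g)$, $\det L_z=((\cosh z-\sinh z)(-\cosh z-\sinh z))^2=1$, and the cancellation of the two sign flips in $\det[g_x,g_y,-\Jt g,g]=\det[g_x,g_y,g,\Jt g]$) are right. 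Your route is arguably cleaner and more conceptual: it explains the result as invariance of the determinant under a unimodular operator rather than as a fortuitous cancellation, and it would generalize verbatim to the $(2n+2)$-dimensional setting of formula (\ref{eq::wst}). What it does not produce is the identity (\ref{eq::Detv1v2v3}) itself, which the paper needs again independently, so in the context of the paper the expansion-based proof earns its keep.
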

\begin{proof}
We have
\begin{align*}
\det [f_x,f_y,f_z,f] &= \det [f_x,f_y,\Jt g\sinh z-g\cosh z,\Jt g\cosh z-g\sinh z] \\
&= (\cosh^2z-\sinh^2z)\det[f_x,f_y,\Jt g,g]=\det[f_x,f_y,\Jt g,g].
\end{align*}
Now
\begin{align*}
\det[f_x,f_y,\Jt g,g] &= \det [\Jt g_x\cosh z-g_x\sinh z,\Jt g_y\cosh z-g_y\sinh z,\Jt g,g] \\
&= \cosh^2z\det[\Jt g_x,\Jt g_y,\Jt g,g]+\sinh^2z\det[g_x,g_y,\Jt g,g]\\
&\phantom{=}-\sinh z \cosh z(\det[\Jt g_x,g_y,\Jt g,g]+\det[g_x,\Jt g_y,\Jt g,g])\\
&=(\cosh^2z-\sinh^2z)\det [g_x,g_y,g, \Jt g],
\end{align*}
where the last equality is a consequence of the following straightforward observation:
\begin{align}
\label{eq::Detv1v2v3}\det [\Jt v_1,v_2,v_3,\Jt v_3]=-\det [v_1,\Jt v_2,v_3,\Jt v_3]
\end{align}
for every $v_1,v_2,v_3\in\R^4$.
Summarising
$$
\det [f_x,f_y,f_z,f] = \det[f_x,f_y,\Jt g,g] = \det [g_x,g_y,g, \Jt g].
$$
\end{proof}
Now we shall state the classification theorems for centro-affine hypersurfaces.
%%%%%%%%%%%%%%%%
%%    THEOREM  %
%%%%%%%%%%%%%%%%
\begin{thm}\label{th::CAHwithDplus}
Let $f\colon M\mapsto \R^4$ be a nondegenerate centro-affine hypersurface with a $\Jt$-tangent centro-affine vector field $C$. If $\DD^+$ is the null-direction for $f$ then for every point $p\in M$ there exists a neighbourhood $U$ of $p$ such that $f|_{U}$ can be expressed in the form:
\begin{align}\label{eq::flocal}
f(x,y,z)=\Jt g(x,y)\cosh z-g(x,y)\sinh z,
\end{align}
where $g(x,y)=x\cdot \gamma _1(y)+\gamma _2(y)$ and $\gamma_1, \gamma_2$ are some curves such that $\Jt \gamma_1=\gamma_1$ and $\det [\gamma_1, \gamma_2',\gamma_2,\Jt \gamma_2]\neq 0$. Moreover, when $\gamma_1, \gamma_2$ are smooth curves such that $\Jt \gamma_1=\gamma_1$ and $\det [\gamma_1, \gamma_2',\gamma_2,\Jt \gamma_2]\neq 0$ then $f$ given by {\rm(\ref{eq::flocal})} is the nondegenerate centro-affine hypersurface with a $\Jt$-tangent centro-affine vector field with the null-direction $\DD^+$.
\end{thm}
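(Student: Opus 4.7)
The plan is to apply Lemma~\ref{lm::NullDirDplus} to obtain an adapted vector field $X$ spanning $\DD^+$, then build local coordinates adapted simultaneously to $X$ and $\xi$ in which $f$ takes the form~(\ref{eq::wst}). Fix $p\in M$ and choose $X\in\DD^+$ nonzero near $p$ with $h(X,X)=0$, $\nabla_X\xi=\nabla_\xi X=-X$ and $\nabla_X X=0$. Then $[X,\xi]=\nabla_\xi X-\nabla_X\xi=0$, while $X\in\DD$ and $\xi\notin\DD$ are linearly independent. Straightening these commuting fields yields coordinates $(x,y,z)$ around $p$ with $\partial/\partial x=X$ and $\partial/\partial z=\xi$. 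Since $C=-f$ for a centro-affine vector field and $\xi=\Jt C$, we have $f_z=-\Jt f$; integrating this linear ODE (using $\Jt^2=\id$) produces
\begin{equation*}
f(x,y,z)=\Jt g(x,y)\cosh z-g(x,y)\sinh z,\qquad g(x,y):=\Jt f(x,y,0),
\end{equation*}
reproducing (\ref{eq::wst}) in these coordinates.

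The properties of $X$ now translate into pointwise conditions on $g$. From $\Jt f_x=f_x$ (i.e.\ $X\in\DD^+$), substituting $f_x=\Jt g_x\cosh z-g_x\sinh z$ and using $\cosh z+\sinh z>0$ forces $\Jt g_x=g_x$. From the Gauss formula (\ref{eq::FormulaGaussa}) applied to $\D_X f_\ast X$, together with $\nabla_X X=0$ and $h(X,X)=0$, we obtain $f_{xx}=0$, hence $g_{xx}=0$, so $g(x,y)=x\gamma_1(y)+\gamma_2(y)$ with $\gamma_1:=g_x$, and $\Jt\gamma_1=\gamma_1$. For nondegeneracy, Lemma~\ref{lm::DetRelation} reduces the condition $\det[f_x,f_y,f_z,f]\neq 0$ to $\det[g_x,g_y,g,\Jt g]\neq 0$. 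Substituting the form of $g$ and eliminating the $x\gamma_1$-terms from columns three and four via column operations yields
\begin{equation*}
\det[g_x,g_y,g,\Jt g]=x\det[\gamma_1,\gamma_1',\gamma_2,\Jt\gamma_2]+\det[\gamma_1,\gamma_2',\gamma_2,\Jt\gamma_2].
\end{equation*}
The first determinant vanishes: $\gamma_1$, $\gamma_1'$ and $\tfrac12(\gamma_2+\Jt\gamma_2)$ all lie in the two-dimensional $+1$-eigenspace of $\Jt$, so decomposing $\gamma_2$ and $\Jt\gamma_2$ into $\pm 1$-eigencomponents exhibits three columns in a two-dimensional subspace. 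Therefore nondegeneracy is equivalent to $\det[\gamma_1,\gamma_2',\gamma_2,\Jt\gamma_2]\neq 0$.

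For the converse, given $\gamma_1,\gamma_2$ as in the statement, define $g$ and $f$ by the formula. Lemma~\ref{lm::DetRelation} combined with the identity above shows that $f$ is a nondegenerate immersion. The identity $\Jt f=-f_z$ shows that $C=-f$ is a $\Jt$-tangent centro-affine vector field with $\xi=f_z$. Finally, $f_x=\gamma_1\cosh z-\gamma_1\sinh z=\gamma_1 e^{-z}$ satisfies $\Jt f_x=f_x$, so $\partial/\partial x$ spans $\DD^+$ and is nonzero, and $f_{xx}=0$ yields $h(\partial/\partial x,\partial/\partial x)=0$, confirming that $\DD^+$ is the null-direction for $f$. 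The principal obstacle is the linear-algebra identity that collapses the two-term expansion of $\det[g_x,g_y,g,\Jt g]$ to the clean $x$-independent condition $\det[\gamma_1,\gamma_2',\gamma_2,\Jt\gamma_2]\neq 0$; the coordinate-straightening step is routine once $[X,\xi]=0$ is observed.
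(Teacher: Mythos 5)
Your proposal is correct and follows essentially the same route as the paper: invoke Lemma~\ref{lm::NullDirDplus}, straighten the commuting fields $X$ and $\xi$, integrate $f_z=-\Jt f$ via Lemma~\ref{lm::Differential-Equation}, deduce $g_{xx}=0$ and $\Jt g_x=g_x$, and reduce nondegeneracy to $\det[\gamma_1,\gamma_2',\gamma_2,\Jt\gamma_2]\neq 0$ before checking the converse. The only deviations are cosmetic: you get $\Jt g_x=g_x$ from $\Jt f_x=f_x$ rather than from $f_{xz}=-f_x$, and you kill $\det[\gamma_1,\gamma_1',\gamma_2,\Jt\gamma_2]$ by an eigenspace-dimension count instead of the paper's identity~(\ref{eq::Detv1v2v3}).
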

\begin{proof}
By Lemma \ref{lm::NullDirDplus} for every $p\in M$ there exist a neighbourhood $U$ and vector field $X\in\DD^+, X\neq 0$ defined on this neighbourhood such that
\begin{align*}
h(X,X)=0,\\
\nabla_XX=0,
\end{align*}
\begin{align}\label{eq::Nabla_XXi}
\nabla_X{\xi}=\nabla_{\xi}X=-X.
\end{align}
By (\ref{eq::Nabla_XXi}) we have $[X,\xi]=0$,
so there exists a local coordinate system $(x,y,z)$ around $p$ such that
$$
\frac{\partial}{\partial x}=X\quad \text{and}\quad \frac{\partial}{\partial z}=\xi
$$
in some neighbourhood of $p$. Without loss of generality  we may assume that the system $(x,y,z)$ is defined on $U$.
Since $\Jt C=f_{\ast}(\xi)$ we have
\begin{align}\label{eq::fzet}
f_z=f_{\ast}(\frac{\partial}{\partial z})=f_{\ast}(\xi)=\Jt C=-\Jt f.
\end{align}
By the Gauss formula we also have
\begin{align}
\label{eq::fxx} f_{xx}=0,\\
\label{eq::fxzet} f_{xz}=f_{\ast}(\nabla _{\frac{\partial}{\partial x}}{\frac{\partial}{\partial z}})=f_{\ast}(\nabla_{\frac{\partial}{\partial x}}{\xi})=f_{\ast}(- \frac{\partial}{\partial x})=-f_x.
\end{align}
Solving (\ref{eq::fzet}) (see Lemma \ref{lm::Differential-Equation}) we get that $f$ can be locally expressed in the form:
\begin{align*}
f(x,y,z)=\Jt g(x,y)\cosh z-g(x,y)\sinh z
\end{align*}
where $g\colon V\ni (x,y)\mapsto g(x,y)\in \R^4$ is some smooth function defined on an open subset $V$ of $\R^2$.
Moreover, by (\ref{eq::fxx}) and (\ref{eq::fxzet}) we have
\begin{align}
\label{eq::gxx=0} g_{xx}=0,\\
\label{eq::Jgx=gx} \Jt g_x=g_x.
\end{align}
Solving (\ref{eq::gxx=0}) we obtain that
\begin{align*}
g(x,y)=x\cdot \gamma _1(y)+\gamma _2(y),
\end{align*}
where $\gamma_1, \gamma_2$ are some smooth curves. From (\ref{eq::Jgx=gx}) we get that $\Jt \gamma_1=\gamma_1$. Since $f$ is a centro-affine  immersion we have $\det[f_x,f_y,f_z,f]\neq 0$ and in consequence $\det [\gamma_1, \gamma_2',\gamma_2,\Jt \gamma_2]\neq 0$, because by (\ref{eq::Detv1v2v3}) $\det [\gamma_1, \gamma_1',\gamma_2,\Jt \gamma_2]= 0$.
\par In order to prove the last part of the theorem, first note that $\det [\gamma_1, \gamma_2,\gamma_2',\Jt \gamma_2]\neq 0$ implies that $f$ given by (\ref{eq::flocal}) is a centro-affine immersion. Indeed, by Lemma \ref{lm::DetRelation} and due to the fact that $\Jt \gamma_1=\gamma_1$ we have
\begin{align*}
\det [f_x,f_y,f_z,f]&=\det [\gamma_1,x\gamma_1'+\gamma_2',x\gamma_1+\gamma_2,x\gamma_1+\Jt\gamma_2]\\
&=\det [\gamma_1,x\gamma_1'+\gamma_2',\gamma_2,\Jt\gamma_2]\\
&=\det [\gamma_1,\gamma_2',\gamma_2,\Jt\gamma_2]\neq 0.
\end{align*}
Moreover, since $\Jt f=-f_z$, the centro-affine transversal vector field is $\Jt$-tangent. Now it is enough to show that $\DD^+$ is the null-direction for $f$. Since
\begin{align}\label{eq::f_x}
f_x=\Jt\gamma_1(y)\cosh z-\gamma_1 (y)\sinh z=\gamma_1(y)(\cosh z-\sinh z)
\end{align}
we have that $\Jt f_x=f_x$ that is $\DD^+=\Span \{\frac{\partial}{\partial x}\}$. From (\ref{eq::f_x}) we also have that $f_{xx}=0$ so, in particular, $h(\frac{\partial}{\partial x},\frac{\partial}{\partial x})=0$ and $\DD^+$ is the null-direction for $f$.
\end{proof}

When $\DD^-$ is the null-direction for $f$, using Lemma \ref{lm::NullDirDminus}, one may prove an analogous result to Theorem \ref{th::CAHwithDplus}.
That is we have:
\begin{thm}\label{th::CAHwithDminus}
Let $f\colon M\mapsto \R^4$ be a nondegenerate centro-affine hypersurface with a $\Jt$-tangent centro-affine vector field $C$. If $\DD^-$ is the null-direction for $f$ then for every point $p\in M$ there exists a neighbourhood $U$ of $p$ such that $f|_{U}$ can be expressed in the form:
\begin{align}\label{eq::flocalDminus}
f(x,y,z)=\Jt g(x,y)\cosh z-g(x,y)\sinh z,
\end{align}
where $g(x,y)=y\cdot \gamma _1(x)+\gamma _2(x)$ and $\gamma_1, \gamma_2$ are some curves such that $\Jt \gamma_1=-\gamma_1$ and $\det [\gamma_1, \gamma_2',\gamma_2,\Jt \gamma_2]\neq 0$. Moreover, when $\gamma_1, \gamma_2$ are smooth curves such that  $\Jt \gamma_1=-\gamma_1$ and $\det [\gamma_1, \gamma_2',\gamma_2,\Jt \gamma_2]\neq 0$ then $f$ given by {\rm(\ref{eq::flocalDminus})} is the nondegenerate centro-affine hypersurface with the $\Jt$-tangent centro-affine vector field with the null-direction $\DD^-$.
\end{thm}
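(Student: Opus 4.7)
The plan is to imitate the proof of Theorem \ref{th::CAHwithDplus} line by line, with the roles of the $+1$ and $-1$ eigendistributions interchanged and the corresponding signs flipped. First, I would apply Lemma \ref{lm::NullDirDminus} to obtain, near an arbitrary point $p \in M$, a vector field $Y \in \DD^-$ with
\[
h(Y,Y) = 0, \qquad \nabla_Y Y = 0, \qquad \nabla_Y \xi = \nabla_\xi Y = Y.
\]
The identity $[Y,\xi] = \nabla_Y \xi - \nabla_\xi Y = 0$ then produces a local chart $(x,y,z)$ in a neighbourhood $U$ of $p$ with $\partial/\partial y = Y$ and $\partial/\partial z = \xi$.

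Next I would translate the data into PDEs for $f$. Since $C = -f$ (centro-affine) and $\Jt C = f_\ast \xi$, we have $f_z = -\Jt f$. The Gauss formula applied to $\nabla_Y Y = 0$ together with $h(Y,Y) = 0$ gives $f_{yy} = 0$, and since $Y \in \DD^-$, equation (\ref{Wzory::eq::6}) combined with $S = \id$ forces $h(Y,\xi) = 0$, so the Gauss formula applied to $\nabla_Y \xi = Y$ gives $f_{yz} = f_y$ (the crucial sign change compared to the $\DD^+$ case). Lemma \ref{lm::Differential-Equation} integrates the first equation and produces $f(x,y,z) = \Jt g(x,y)\cosh z - g(x,y)\sinh z$. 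The equation $f_{yy} = 0$ forces $g_{yy} = 0$, hence $g(x,y) = y\gamma_1(x) + \gamma_2(x)$; and differentiating $f$ in $y,z$ and using $f_{yz} = f_y$ yields $\Jt g_y = -g_y$, i.e.\ $\Jt \gamma_1 = -\gamma_1$.

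For the nondegeneracy characterisation I would apply Lemma \ref{lm::DetRelation} to reduce $\det[f_x,f_y,f_z,f]$ to $\det[g_x,g_y,g,\Jt g]$. Expanding with $g_x = y\gamma_1' + \gamma_2'$, $g_y = \gamma_1$, $g = y\gamma_1 + \gamma_2$, $\Jt g = -y\gamma_1 + \Jt \gamma_2$ and using column operations that invoke the $\gamma_1$-column to kill the $y\gamma_1$ terms in the third and fourth columns, one reduces to $\det[y\gamma_1' + \gamma_2',\gamma_1,\gamma_2,\Jt\gamma_2]$. The residual $y\gamma_1'$ contribution vanishes: differentiating $\Jt \gamma_1 = -\gamma_1$ gives $\Jt \gamma_1' = -\gamma_1'$, and then (\ref{eq::Detv1v2v3}) applied with $v_1 = \gamma_1$, $v_2 = \gamma_1'$, $v_3 = \gamma_2$ forces $\det[\gamma_1,\gamma_1',\gamma_2,\Jt\gamma_2] = 0$. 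After a column swap, what remains is $-\det[\gamma_1,\gamma_2',\gamma_2,\Jt\gamma_2]$, so nondegeneracy is equivalent to $\det[\gamma_1,\gamma_2',\gamma_2,\Jt\gamma_2] \neq 0$.

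For the converse, given $\gamma_1, \gamma_2$ satisfying the hypotheses, I would run the above computation in reverse: the determinant identity shows $f$ is an immersion; $\Jt f = -f_z$ shows the centro-affine field $-f$ is $\Jt$-tangent; and $f_y = \Jt \gamma_1\cosh z - \gamma_1\sinh z = -\gamma_1 e^z$ gives $\Jt f_y = -f_y$ and $f_{yy} = 0$, so $\Span\{\partial/\partial y\} = \DD^-$ is the null-direction. The main obstacle I anticipate is the determinant bookkeeping in the forward direction — in particular recognising that the $y\gamma_1'$ contribution vanishes automatically by (\ref{eq::Detv1v2v3}) together with the parity $\Jt \gamma_1' = -\gamma_1'$; the remainder of the argument is a sign-by-sign mirror of Theorem \ref{th::CAHwithDplus}.
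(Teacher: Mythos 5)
Your proposal is correct and is exactly the argument the paper intends: the paper gives no separate proof of Theorem \ref{th::CAHwithDminus}, stating only that it follows from Lemma \ref{lm::NullDirDminus} by mirroring the proof of Theorem \ref{th::CAHwithDplus}, which is precisely what you carry out (with the right sign flips, e.g.\ $f_{yz}=f_y$, $\Jt\gamma_1=-\gamma_1$, and the vanishing of $\det[\gamma_1,\gamma_1',\gamma_2,\Jt\gamma_2]$ via (\ref{eq::Detv1v2v3})).
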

%\begin{proof}
%\end{proof}
In order to illustrate the above theorems we give some explicit examples of centro-affine hypersurfaces with a $\Jt$-tangent centro-affine vector field
with the null-direction $\DD^+$ or $\DD^-$.

%%%%%%%%%%%%%%%%
%%    EXAMPLE  %
%%%%%%%%%%%%%%%%
\begin{exa}\label{ex::CentroNullDplus}
Let us consider the affine immersion defined by $$f\colon \R^3\ni (x,y,z)\mapsto \Jt g(x,y)\cosh z-g(x,y)\sinh z\in\R^4$$ where
\begin{align*}
g\colon \R^2\ni (x,y)\mapsto \left [\begin{matrix}
xy+1\\
-x+y\\
xy\\
-x
\end{matrix}\right ]\in \R^4
\end{align*}
with the transversal vector field $C=-f$. Of course $C$ is $\Jt$-tangent, $\tau =0$ and $S=\id$.
By straightforward computations we obtain
\begin{align*}
h= \left [\begin{matrix}
0 & -\frac{1}{y^2+1} & 0\\
\frac{1}{y^2+1} & 0 & \frac{2x+y}{y^2+1}\\
0 & \frac{2x+y}{y^2+1} &-1
\end{matrix}\right ]
\end{align*}
in the canonical basis $\{\frac{\partial}{\partial x},\frac{\partial}{\partial y},\frac{\partial}{\partial z}\}$ of $\R^3$ so in particular $f$ is nondegenerate.
It is easy to see that $\frac{\partial}{\partial x}\in\DD^+$ and since $h(\frac{\partial}{\partial x},\frac{\partial}{\partial x})=0$ distribution $\DD^+$
is the null-direction for $f$. One may also compute
\begin{align*}
\Theta\Big(\frac{\partial}{\partial x},\frac{\partial}{\partial y},\frac{\partial}{\partial z}\Big)=y^2+1\\
\omega_h\Big(\frac{\partial}{\partial x},\frac{\partial}{\partial y},\frac{\partial}{\partial z}\Big)=\frac{1}{y^2+1}
\end{align*}
so $f$ is not an affine hypersphere. Moreover, $f$ is not a hyperquadric since $\nabla h\neq 0$.
\end{exa}
One may construct a similar example, when $\DD^-$ is the null-direction. Namely we have:
%%%%%%%%%%%%%%%%
%%    EXAMPLE  %
%%%%%%%%%%%%%%%%
\begin{exa}\label{ex::CentroNullDminus}
Let us consider the affine immersion defined by $$f\colon \R^3\ni (x,y,z)\mapsto \Jt g(x,y)\cosh z-g(x,y)\sinh z\in\R^4,$$ where
\begin{align*}
g\colon \R^2\ni (x,y)\mapsto \left [\begin{matrix}
xy+1\\
x-y\\
-xy\\
y
\end{matrix}\right ]\in \R^4
\end{align*}
with the transversal vector field $C=-f$. Of course $C$ is $\Jt$-tangent, $\tau =0$ and $S=\id$.
By straightforward computations we obtain
\begin{align*}
h= \left [\begin{matrix}
0 & -\frac{1}{x^2+1} & -\frac{x+2y}{x^2+1}\\
\frac{1}{x^2+1} & 0 & 0\\
-\frac{x+2y}{x^2+1} & 0 &-1
\end{matrix}\right ]
\end{align*}
in the canonical basis $\{\frac{\partial}{\partial x},\frac{\partial}{\partial y},\frac{\partial}{\partial z}\}$ of $\R^3$ so in particular $f$ is nondegenerate.
It is easy to see that $\frac{\partial}{\partial y}\in\DD^-$ and since $h(\frac{\partial}{\partial y},\frac{\partial}{\partial y})=0$ distribution $\DD^-$
is the null-direction for $f$. One may also compute
\begin{align*}
\Theta\Big(\frac{\partial}{\partial x},\frac{\partial}{\partial y},\frac{\partial}{\partial z}\Big)=x^2+1\\
\omega_h\Big(\frac{\partial}{\partial x},\frac{\partial}{\partial y},\frac{\partial}{\partial z}\Big)=\frac{1}{x^2+1}
\end{align*}
so $f$ is not an affine hypersphere. Moreover, $f$ is not a hyperquadric since $\nabla h\neq 0$.
\end{exa}
The next example have the property that both $\DD^+$ and $\DD^-$ are null-directions.
%%%%%%%%%%%%%%%%
%%    EXAMPLE   %
%%%%%%%%%%%%%%%%
\begin{exa}\label{ex::CentroNullDplusDminusHip1}
Let us consider a function $f$ defined by $$f\colon \R^3 \ni (x,y,z)\mapsto \Jt g(x,y)\cosh z-g(x,y)\sinh z\in\R^4$$ where

\begingroup
\renewcommand*{\arraystretch}{1.5}
\begin{align*}
g\colon \R^2\ni (x,y)\mapsto \left [\begin{matrix}
x+\frac{1}{2}y-xy+\frac{1}{4}\\
x+\frac{1}{2}y+xy-\frac{1}{4}\\
x-\frac{1}{2}y+xy+\frac{3}{4}\\
x-\frac{1}{2}y-xy-\frac{3}{4}
\end{matrix}\right ]\in \R^4.
\end{align*}
\endgroup

It is easy to verify that $f$ is an immersion and $C=-f$ is a transversal vector field. Of course $C$ is $\Jt$-tangent,  $\tau =0$ and $S=\id$.
By straightforward computations we obtain
\begin{align*}
h= \left [\begin{matrix}
0 & -2 & -4y\\
-2 & 0 & 0\\
-4y & 0 &-1
\end{matrix}\right ]
\end{align*}
in the canonical basis $\{\frac{\partial}{\partial x},\frac{\partial}{\partial y},\frac{\partial}{\partial z}\}$.
We also compute
\begin{align*}
\Theta\Big(\frac{\partial}{\partial x},\frac{\partial}{\partial y},\frac{\partial}{\partial z}\Big)=2\\
\omega_h\Big(\frac{\partial}{\partial x},\frac{\partial}{\partial y},\frac{\partial}{\partial z}\Big)=2
\end{align*}
so $f$ is nondegenerate and $f$ is an affine hypersphere. It is easy to see that $\frac{\partial}{\partial y}\in \DD^-$ and since $h(\frac{\partial}{\partial y},\frac{\partial}{\partial y})=0$ the distribution $\DD^-$ is the null-direction for $f$.
\par Let
$$
X:=\frac{1}{4}\cdot \frac{\partial}{\partial x}+y^2\cdot \frac{\partial}{\partial y}-y\cdot \frac{\partial}{\partial z}.
$$
By straightforward computations it can be checked that $X\in\DD^+$ and $h(X,X)=0$ so $\DD^+$ is also  the null-direction for $f$.
\par Moreover, one may compute that $\nabla h=0$ so $f$ is a hyperquadric.
\end{exa}

The first two examples are neither hyperquadrics nor affine hyperspheres, however the last example is both the affine hypersphere and the hyperquadric.
This is not a coincidence. Namely, we have the following theorem:
%%%%%%%%%%%%%%%%
%%    THEOREM  %
%%%%%%%%%%%%%%%%
\begin{thm}\label{thm::CentroWithDplusAndDminus}
Let $f\colon M\mapsto \R^4$ be a centro-affine nondegenerate  hypersurface with a $\Jt$-tangent centro-affine vector field $C$ and let $\DD^-$ and $\DD^+$ are null-directions for $f$. Then
$f$ is both the affine hypersphere and the hyperquadric. Moreover, if $f\colon M\rightarrow \R^4$ is a centro-affine nondegenerate hyperquadric with a $\Jt$-tangent centro-affine vector field $C$ then $\DD^-$ and $\DD^+$ are null-directions for $f$.
\end{thm}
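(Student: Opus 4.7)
The plan is to handle the two implications separately, disposing of the converse first as it is the easier one. Assume $f$ is a centro-affine nondegenerate hyperquadric with $\Jt$-tangent centro-affine $C$. Then $M$ lies in a level set $\{w \in \R^4 : q(w, w) = c\}$ of a nondegenerate symmetric bilinear form $q$ with $c \neq 0$, and one may take $C = -f$; a direct Gauss-formula computation yields $h = q|_{TM}/c$. The $\Jt$-tangency of $C$ reads $q(f, \Jt f) = 0$ on $M$, so the homogeneous quadratic $\phi(w) := q(w, \Jt w)$ vanishes on $\{q(w,w) = c\}$ with $c \neq 0$; by a degree comparison $\phi \equiv 0$ on $\R^4$, and polarisation gives $q(u, \Jt v) + q(\Jt u, v) = 0$ for all $u, v \in \R^4$. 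For $v \in \DD^\pm$, $\Jt v = \pm v$, so $2q(v,v) = q(v, \Jt v) + q(\Jt v, v) = 0$ and hence $h(v,v) = 0$, proving that $\DD^+$ and $\DD^-$ are null-directions.

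For the forward direction, Lemmas \ref{lm::NullDirDplus} and \ref{lm::NullDirDminus} furnish local nonvanishing vector fields $X \in \DD^+$, $Y \in \DD^-$ with $h(X,X) = h(Y,Y) = 0$, $\nabla_X X = \nabla_Y Y = 0$, $\nabla_X \xi = \nabla_\xi X = -X$ and $\nabla_Y \xi = \nabla_\xi Y = Y$; set $\rho := h(X,Y)$, which is nonzero by nondegeneracy of $h|_\DD$. Combining (\ref{Wzory::eq::1})--(\ref{Wzory::eq::6}) with $S = \id$ and $\tau = 0$ delivers $h(X,\xi) = h(Y,\xi) = 0$, $h(\xi,\xi) = -1$, and, from (\ref{Wzory::eq::1})--(\ref{Wzory::eq::2}) applied to $\nabla_X Y$ and $\nabla_Y X$, the expressions $\nabla_X Y = bY - \rho\xi$ and $\nabla_Y X = \alpha X + \rho\xi$ for scalar functions $b,\alpha$. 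Running the Codazzi equation (\ref{eq::Codazzih}) on the triples $(X,Y,X)$, $(X,Y,Y)$, $(\xi,X,Y)$, $(X,\xi,\xi)$ and $(Y,\xi,\xi)$ yields the supplementary identities $X(\rho) = b\rho$, $Y(\rho) = \alpha\rho$, $\xi(\rho) = 0$ and $\nabla_\xi\xi = 0$.

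With this data in hand, a direct check shows $(\nabla_U h)(V,W) = 0$ on every symmetric triple $U,V,W \in \{X,Y,\xi\}$, so $\nabla h = 0$ on $M$. Because $\tau = 0$ implies $\nabla\Theta = 0$, and $\nabla h = 0$ forces $\nabla$ to coincide with the Levi-Civita connection of $h$ (making $\omega_h$ likewise $\nabla$-parallel), the ratio $\omega_h/\Theta$ is a nonzero constant on the connected manifold $M$. A constant rescaling $\bar C = \Phi C$ with $\Phi$ tuned to this constant therefore produces the Blaschke field; since $\Phi$ is constant we have $\bar\nabla = \nabla$, $\bar\nabla\bar h = 0$ and $\bar S = \Phi\,\id$, so $f$ is a proper affine hypersphere, and the classical characterisation of quadrics among Blaschke hypersurfaces (cf.\ \cite{NS}) identifies $f$ as a hyperquadric.

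The principal obstacle is the exhaustive verification $(\nabla_U h)(V,W) = 0$ on all eighteen symmetric triples: each component is mechanical once the Christoffel-type data is tabulated, but several cases rely crucially on the Codazzi-derived identities for $X(\rho)$, $Y(\rho)$ and $\xi(\rho)$ rather than on the outputs of Lemmas \ref{lm::NullDirDplus}--\ref{lm::NullDirDminus} alone, so the bookkeeping must be executed carefully to render $\nabla h = 0$ fully transparent.
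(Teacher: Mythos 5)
Your proposal is correct, and it splits naturally into a part that mirrors the paper and a part that genuinely differs. For the forward implication you follow essentially the paper's route: the paper packages your frame computations into Lemma \ref{lm::NullDirDplusDminus} (your $\rho$ is its $e^{\beta}$ and your coefficients $b,\alpha$ are its $a,b$), the only variation being that you obtain $\xi(\rho)=0$ from the Codazzi equation on $(\xi,X,Y)$ where the paper uses the Gauss equation; the subsequent verification that $(\nabla_Uh)(V,W)=0$ on the frame $\{X,Y,\xi\}$ is exactly the paper's argument, and your passage from $\nabla h=0$ to the hypersphere conclusion (both $\Theta$ and $\omega_h$ are $\nabla$-parallel, so a constant rescaling of $C$ yields the Blaschke normal with $\bar S=\Phi\,\id$) is a welcome expansion of the paper's one-line ``$\nabla h=0$ implies $\nabla\omega_h=0$''. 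The converse is where you diverge: the paper stays intrinsic, reading $S=\id$, $\tau=0$, $\nabla h=0$ off the hyperquadric hypothesis and computing $0=(\nabla_Xh)(X,\xi)=2h(X,\varphi X)$ via Theorem \ref{tw::Wzory}, whereas you argue extrinsically with the defining quadratic form $q$, showing by homogeneity and polarisation that $q(\cdot,\Jt\cdot)$ is skew-symmetric and hence $q$ --- and so $h$ --- vanishes on each eigendistribution. Both arguments are sound; yours is more elementary and makes the geometric reason visible, but it rests on the quadric having the form $\{q(w,w)=c\}$ with $q$ nondegenerate and $c\neq0$, i.e.\ being centred at the origin. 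This is the same implicit normalisation the paper invokes when it deduces $\nabla h=0$ for the centro-affine structure from the word ``hyperquadric'', so it is not a gap relative to the paper, but you should state the identification explicitly. The only loose end in your write-up is that the case-by-case verification of $\nabla h=0$ is announced rather than carried out; the identities you isolate ($X(\rho)=b\rho$, $Y(\rho)=\alpha\rho$, $\xi(\rho)=0$, $\nabla_\xi\xi=0$) are precisely the ones needed, so this is routine bookkeeping.
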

In order to prove the above theorem we need the following lemma:

%%%%%%%%%%%%%%%%
%%    LEMMA    %
%%%%%%%%%%%%%%%%
\begin{lem}\label{lm::NullDirDplusDminus}
Let $f\colon M\mapsto \R^4$ be a nondegenerate centro-affine hypersurface with a $\Jt$-tangent centro-affine vector field $C$. If both $\DD^+$ and $\DD^-$ are null-directions for $f$, then for every point $p\in M$ there exist a neighbourhood $U$ of $p$ and vector fields $X,Y$ defined on $U$ such that $X\neq 0, X\in \DD^+$, $Y\neq 0, Y\in \DD^-$ and the following conditions are satisfied:
\begin{align}
\label{eq::NullDirDplusDminus::1}\nabla _X\xi=\nabla _{\xi}X=-X, \quad \nabla _Y\xi=\nabla _{\xi}Y=Y,\\
\label{eq::NullDirDplusDminus::2}h(X,X)=0,\quad h(Y,Y)=0,\\
\label{eq::NullDirDplusDminus::3}\nabla _XX=0,\\
\label{eq::NullDirDplusDminus::4}\nabla _YY=0,\\
\label{eq::NullDirDplusDminus::5}\nabla _XY=aY-e^{\beta}\xi,\\
\label{eq::NullDirDplusDminus::6}\nabla _YX=bX+e^{\beta}\xi,\\
\label{eq::NullDirDplusDminus::7} h(X,Y)=e^{\beta},\\
\label{eq::NullDirDplusDminus::8} \xi (\beta)=\xi (a)=\xi (b)=0,\\
\label{eq::NullDirDplusDminus::9} X(\beta)=a, Y(\beta)=b,
\end{align}
for some smooth functions $a, b, \beta$ on $U$.
\end{lem}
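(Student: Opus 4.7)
The plan is to combine Lemmas~\ref{lm::NullDirDplus} and~\ref{lm::NullDirDminus} on a common neighborhood $U$ of $p$ to obtain nonzero $X\in\DD^+$ and $Y\in\DD^-$ that immediately realize (\ref{eq::NullDirDplusDminus::1})--(\ref{eq::NullDirDplusDminus::4}); only (\ref{eq::NullDirDplusDminus::5})--(\ref{eq::NullDirDplusDminus::9}) then need work. Before expanding covariant derivatives I would first record the values of $h$ on the frame $\{X,Y,\xi\}$: from (\ref{Wzory::eq::6}) with $S=\id$ we have $h(X,\xi)=h(Y,\xi)=0$ (since $X,Y\in\ker\eta$) and $h(\xi,\xi)=-1$, while the null hypotheses supply $h(X,X)=h(Y,Y)=0$. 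Nondegeneracy of $h$ then forces $h(X,Y)\ne 0$, and after possibly replacing $X$ by $-X$ (a substitution that preserves (\ref{eq::NullDirDplusDminus::1})--(\ref{eq::NullDirDplusDminus::3})) we may assume $h(X,Y)>0$ and define $\beta$ by $e^\beta=h(X,Y)$, which is (\ref{eq::NullDirDplusDminus::7}).

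For (\ref{eq::NullDirDplusDminus::5}) I would expand $\nabla_XY=\alpha X+aY+\gamma\xi$ in the frame. Formula (\ref{Wzory::eq::1}) with $\tau=0$, $\eta(Y)=0$, and $\varphi Y=-Y$ gives $\gamma=\eta(\nabla_XY)=-e^\beta$; formula (\ref{Wzory::eq::2}) together with $\eta(Y)=0$ yields $\varphi(\nabla_XY)=-\nabla_XY-e^\beta\xi$, and comparing this with the direct expansion $\varphi(\nabla_XY)=\alpha X-aY$ forces $\alpha=0$. A parallel argument using $\varphi X=X$ produces (\ref{eq::NullDirDplusDminus::6}).

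The remaining identities come from the Codazzi and Gauss equations of Theorem~\ref{tw::FundamentalEquations} with $\tau=0$, $S=\id$. A short computation using (\ref{eq::FormulaGaussa})--(\ref{eq::FormulaWeingartena}) together with $h(\xi,\xi)=-1$ first gives $\nabla_\xi\xi=0$. Differentiating $e^\beta=h(X,Y)$ along $\xi$, the terms $h(\nabla_\xi X,Y)+h(X,\nabla_\xi Y)$ cancel by (\ref{eq::NullDirDplusDminus::1}), and Codazzi rewrites $(\nabla_\xi h)(X,Y)$ as $(\nabla_Xh)(\xi,Y)$, which expands to zero using the tabulated $h$-values; hence $\xi(\beta)=0$. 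Next, computing $R(\xi,X)Y$ two ways pins down $\xi(a)$: the Gauss equation (\ref{eq::Gauss}) gives $e^\beta\xi$, while $[\xi,X]=0$ (immediate from (\ref{eq::NullDirDplusDminus::1})), the already-established $\xi(\beta)=0$, and $\nabla_\xi\xi=0$ reduce the direct expansion to $\xi(a)Y+e^\beta\xi$. Comparison forces $\xi(a)=0$, and the mirror calculation with $R(\xi,Y)X$ yields $\xi(b)=0$. Finally, differentiating $e^\beta=h(X,Y)$ along $X$ leaves $X(\beta)e^\beta=(\nabla_Xh)(X,Y)+ae^\beta$, and Codazzi converts the remaining term into $(\nabla_Yh)(X,X)=-2h(\nabla_YX,X)=-2(bh(X,X)+e^\beta h(\xi,X))=0$, so $X(\beta)=a$; the symmetric argument gives $Y(\beta)=b$.

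The main obstacle here is not any single deep step but the bookkeeping: the proof juggles Codazzi and Gauss, the paracontact identities (\ref{Wzory::eq::1})--(\ref{Wzory::eq::6}), and several sign conventions simultaneously, and the auxiliary fact $\nabla_\xi\xi=0$ has to be isolated before the Gauss-equation comparison yields $\xi(a)=0$. Once the frame $\{X,Y,\xi\}$ is fixed and the values of $h$ on it are tabulated, however, each identity collapses to a short calculation in this frame.
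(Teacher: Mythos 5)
Your proposal is correct and follows essentially the same route as the paper: combine Lemmas~\ref{lm::NullDirDplus} and~\ref{lm::NullDirDminus}, expand $\nabla_XY$ and $\nabla_YX$ in the frame via (\ref{Wzory::eq::1})--(\ref{Wzory::eq::2}) and (\ref{Wzory::eq::6}), use the Gauss equation for $\xi(a)=\xi(b)=0$, and use Codazzi for $X(\beta)=a$ and $Y(\beta)=b$. The only cosmetic differences are that the paper reads off $\xi(\beta)=0$ directly from the $\xi$-component of the identity $R(\xi,X)Y=e^{\beta}\xi$ rather than from a separate Codazzi computation, and it leaves the auxiliary fact $\nabla_\xi\xi=0$ implicit where you isolate it explicitly.
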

\begin{proof}
By Lemma \ref{lm::NullDirDplus} and Lemma \ref{lm::NullDirDminus} there exist $X\in \DD^+$, $X\neq 0$, $Y\in \DD^-$, $Y\neq 0$ defined on some neighbourhood of $p$ such that (\ref{eq::NullDirDplusDminus::1})--(\ref{eq::NullDirDplusDminus::4}) hold.
Since $f$ is nondegenerate $h(X,Y)\neq 0$. Without loss of generality (replacing $Y$ by $-Y$ if needed) we may assume that $h(X,Y)=e^{\beta}$, where $\beta$ is a smooth function on $U$.
From formulas (\ref{Wzory::eq::1}), (\ref{Wzory::eq::2}), (\ref{Wzory::eq::6}) and due to the fact that  $S=\id$ we get
\begin{align*}
\eta (\nabla _XY)&=-h(X,Y)=-e^{\beta},\\
\eta (\nabla _YX)&=h(X,Y)=e^{\beta},\\
\varphi (\nabla _XY)&=-\nabla _XY-e^{\beta}\xi,\\
\varphi (\nabla _YX)&=\nabla _YX-e^{\beta}\xi.
\end{align*}
The above implies that there exist smooth functions $a$ and $b$ such that
\begin{align*}
\nabla _XY=aY-e^{\beta}\xi,\\
\nabla _YX=bX+e^{\beta}\xi.
\end{align*}
The above and the Gauss equation imply
\begin{align*}
e^{\beta}\xi=R(\xi ,X)Y=\xi (a)Y-e^{\beta}\xi (\beta)\xi +e^{\beta}\xi
\end{align*}
and
\begin{align*}
e^{\beta}\xi=R(\xi ,Y)X=\xi (b)X+e^{\beta}\xi (\beta)\xi +e^{\beta}\xi.
\end{align*}
The above implies $\xi (\beta)=\xi (a)=\xi (b)=0.$
From the Codazzi equation for $h$ we have
\begin{align*}
0=(\nabla_Yh)(X,X)=(\nabla_Xh)(X,Y)&=X(e^{\beta})-h(X,aY-e^{\beta}\xi)\\
&=e^{\beta}X(\beta)-ae^{\beta}
\end{align*} that is $X(\beta)=a$.
In a similar way from the Codazzi equation for $h$ we get
\begin{align*}
0=(\nabla_Xh)(Y,Y)=(\nabla_Yh)(X,Y)&=Y(e^{\beta})-h(bX+e^{\beta}\xi,Y)\\
&=e^{\beta}Y(\beta)-be^{\beta}
\end{align*} that is $Y(\beta)=b$.
\end{proof}

%%%%%%%%%%%%%%%%%%%%%%%%%%%%%%
%%%%%%%%%%%%%%%%%%%%%%%%%%%%%%
\begin{proof}[Proof of Theorem \ref{thm::CentroWithDplusAndDminus}]
Let $X$, $Y$ and $\xi$ be as in Lemma \ref{lm::NullDirDplusDminus}. It is easy to see that
\begin{align*}
(\nabla _{\xi}h) (U,V)=\xi (h(U,V))-h(\nabla_{\xi}U,V)-h(U,\nabla _{\xi}V)=0
\end{align*} for $U,V\in \{X,Y,\xi \}$, because $h(U,V)\in \{0,-1,e^{\beta}\}$ and $\xi (\beta)=0$. So we have $\xi (h(U,V))=0$.
Moreover we have
\begin{align*}
(\nabla_{U}h)(V,W)=0
\end{align*}
for every $U,V,W\in \{X,Y\}$.
The above implies that
\begin{align*}
(\nabla _{Z_1}h)(Z_2,Z_3)=0
\end{align*}
for every $Z_1,Z_2,Z_3\in \XM $.\\
%To get that $\nabla \omega_h=0$ we compute
%\begin{align*}
%\omega_h(X,Y,\xi) &=\sqrt {\Bigg|\det
%\left [\begin{matrix}
%h(X,X) & h(X,Y) & h(X,\xi)\\
%h(Y,X) & h(Y,Y) & h(Y,\xi)\\
%h(\xi ,X) & h(\xi ,Y) & h(\xi ,\xi)
%\end{matrix}\right ]\Bigg|}\\
%&=\sqrt {\Bigg|\det
%\left [\begin{matrix}
%0 & h(X,Y) & 0\\
%h(Y,X) & 0 & 0\\
%0 & 0 & -1
%\end{matrix}\right ]\Bigg|}\\
%&=\sqrt{(h(X,Y))^2}=e^{\beta}
%\end{align*}
%Now we have
%\begin{align*}
%(\nabla _X\omega_h)(X,Y,\xi)&=X(\omega_h(X,Y,\xi))-\omega_h(\nabla_XX,Y,\xi)-\omega_h(X,\nabla _XY,\xi)\\
%&\phantom{=}-\omega _h(X,Y,\nabla _X{\xi})\\
%&=X(e^{\beta})-\omega_h(X,aY-e^{\beta}\xi,\xi)=e^{\beta}X(\beta)-ae^{\beta}=0
%\end{align*}
%and
%\begin{align*}
%(\nabla _Y\omega_h)(X,Y,\xi)&=Y(\omega_h(X,Y,\xi))-\omega_h(\nabla_YX,Y,\xi)-\omega_h(X,\nabla _YY,\xi)\\
%&\phantom{=}-\omega _h(X,Y,\nabla _Y{\xi})\\
%&=Y(e^{\beta})-\omega_h(bX+e^{\beta}\xi,Y,\xi)=e^{\beta}Y(\beta)-be^{\beta}=0.
%\end{align*}
%We also have
%\begin{align*}
%(\nabla _{\xi}\omega_h)(X,Y,\xi)=\xi (e^{\beta})-\omega_h(-X,Y,\xi)-\omega_h(X,Y,\xi)=0,
%\end{align*} since $\xi (\beta)=0$. Summarizing $\nabla \omega_h=0$ and the proof is completed.
Since $\nabla h=0$ then in particular $\nabla \omega_h=0$ so $f$ is an  affine hypersphere.
\par In order to prove the second part of the theorem first note that since $f$ is a centro-affine hyperquadric we have $S=\id$, $\tau=0$ and
 $\nabla h=0$. Let $X\in \DD$, then we have
 $$
0=(\nabla_X h)(X,\xi)=-h(\nabla_XX,\xi)-h(X,\nabla_X\xi).
 $$
By Theorem \ref{tw::Wzory} we also have
$$
-h(\nabla_XX,\xi)=\eta(\nabla_XX)=h(X,\varphi X)
$$
and
$$
-h(X,\nabla_X\xi)=-h(X,-\varphi X)=h(X,\varphi X).
$$
Summarising
$$
h(X,\varphi X)=0
$$
for every $X\in\DD$.
The above immediately implies that $h(X,X)=0$ if $X\in\DD^+$ or $X\in\DD^-$ so both $\DD^+$ and $\DD^-$ are null-directions for $f$.
\end{proof}

\section{$\Jt$-tangent affine hyperspheres}
It is well known that every proper affine hypersphere is in particular a centro-affine hypersurface. Of course the converse is not true in general.
The purpose of this section is to give a local classification of $3$-dimensional $\Jt$-tangent affine hyperspheres $f$ with the property that either $\DD^+$ or $\DD^-$
is the null-direction for $f$.

\par First recall (\cite{SZ3}) that an affine hypersphere with a transversal $\Jt$-tangent Blaschke field is called \emph{a $\Jt$-tangent affine hypersphere}. Moreover, in \cite{SZ3} we proved that every $\Jt$-tangent affine hypersphere is proper. Now, we shall prove classification theorems for affine hyperspheres with null-directions.
%%%%%%%%%%%%%%%%
%%    THEOREM  %
%%%%%%%%%%%%%%%%
\begin{thm}
Let $f\colon M\mapsto \R^4$ be an affine hypersphere with the null-direction $\DD^+$ and the $\Jt$-tangent Blaschke field $C$. Then there exist smooth planar curves
$\alpha\colon I\rightarrow \R^2$, $\beta\colon I\rightarrow \R^2$ with the property
\begin{align}\label{eq::detalphabeta}
\det [\alpha,\alpha']\cdot \det [\beta,\beta']\neq 0
\end{align}
and a smooth function $A\colon I\rightarrow \R$, where $I$ is an open interval in $\R$, such that $f$ can be locally expressed in the form
\begin{align}\label{eq::formf}
f(x,y,z)=\Jt g(x,y)\cosh z-g(x,y)\sinh z,
\end{align}
where $g(x,y)=(x+A(y))\cdot (\alpha,\alpha)(y)+B(y)(\alpha',\alpha')(y)+(\beta,-\beta)(y)$ and
$$
B(y)=\frac{E}{\sqrt{|\det [\alpha(y),\alpha'(y)]\cdot \det [\beta(y),\beta'(y)]|}}
$$
for some non-zero constant $E$.
\par Moreover, for any smooth curves $\alpha, \beta\colon I\rightarrow \R^2$ such that (\ref{eq::detalphabeta}) and for any smooth function $A\colon I\rightarrow \R$ and a non-zero constant $E$ the function $f$ given by {\rm(\ref{eq::formf})} is the $\Jt$-tangent affine hypersphere with the null-direction $\DD^+$.
\end{thm}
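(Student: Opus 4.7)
The strategy is to reduce the affine hypersphere classification to the centro-affine case already settled in Theorem \ref{th::CAHwithDplus}, and then read off the extra constraint that the Blaschke normalization imposes on $g$. Since every $\Jt$-tangent affine hypersphere is proper, after translating the center to the origin the Blaschke field takes the form $C=-\lambda f$ for a constant $\lambda\neq 0$. Consequently $-f=(1/\lambda)C$ is a $\Jt$-tangent centro-affine transversal field which inherits the null-direction $\DD^+$, so Theorem \ref{th::CAHwithDplus} produces locally
\[
f(x,y,z) = \Jt g(x,y)\cosh z - g(x,y)\sinh z,\qquad g(x,y)=x\gamma_1(y)+\gamma_2(y),
\]
with $\Jt\gamma_1=\gamma_1$ and $\det[\gamma_1,\gamma_2',\gamma_2,\Jt\gamma_2]\neq 0$. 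It then remains to rewrite $g$ in the stated form and to identify the exact function $B$.

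For this I would exploit the eigendecomposition $\R^4=V^+\oplus V^-$ of $\Jt$, where $V^\pm=\{(u,\pm u):u\in\R^2\}$. From $\Jt\gamma_1=\gamma_1$ one writes $\gamma_1(y)=(\alpha(y),\alpha(y))$, and decomposing $\gamma_2=\gamma_2^++\gamma_2^-$ along the eigenspaces gives $\gamma_2^-=(\beta,-\beta)$. To obtain $\gamma_2^+=A(\alpha,\alpha)+B(\alpha',\alpha')$ one needs $\det[\alpha,\alpha']\neq 0$, which I would extract from the nondegeneracy of $h$. The relations $f_{xx}=0$, $f_{xz}=-f_x$ (from the proof of Theorem \ref{th::CAHwithDplus}) and $f_{zz}=f$ (from $f_z=-\Jt f$) force $h(\partial_x,\partial_x)=h(\partial_x,\partial_z)=0$ and $h(\partial_z,\partial_z)=-1$, so the matrix of $h$ in $\{\partial_x,\partial_y,\partial_z\}$ takes the form
\[
[h]=\begin{pmatrix} 0 & h_{xy} & 0 \\ h_{xy} & h_{yy} & h_{yz} \\ 0 & h_{yz} & -1 \end{pmatrix},
\]
whose determinant equals $h_{xy}^2$. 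Computing $h_{xy}$ and $\Theta$ via Lemma \ref{lm::DetRelation} together with the product identity
\[
\det[u,w,v,x]=-4\det[u_R,v_R]\,\det[w_R,x_R],
\]
valid for $u,v\in V^+$ and $w,x\in V^-$ (the subscript $R$ denoting the $\R^2$-part), expresses both quantities explicitly in terms of $\det[\alpha,\alpha']$, $\det[\beta,\beta']$ and the $\R^2$-part of $\gamma_2^+$; nondegeneracy of $h$ then forces $\det[\alpha,\alpha']\neq 0$, while transversality of $-f$ yields $\det[\beta,\beta']\neq 0$ together with $B\neq 0$.

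The Blaschke condition---that the Blaschke field be a constant multiple of $-f$---is equivalent, via the change-of-transversal-field formulas of Theorem \ref{tw::ChangeOfTransversalField}, to $\omega_h/|\Theta|$ being a non-zero constant. Substituting the explicit expressions above yields $\omega_h/|\Theta|\propto 1/(B^2\,|\det[\alpha,\alpha']\cdot\det[\beta,\beta']|)$, so constancy reduces to $B^2\,|\det[\alpha,\alpha']\cdot\det[\beta,\beta']|=\const$, which after absorbing the sign into the constant $E$ is exactly $B(y)=E/\sqrt{|\det[\alpha,\alpha']\cdot\det[\beta,\beta']|}$. The converse direction reverses this argument: for admissible data $(\alpha,\beta,A,E)$ the inequalities on $\alpha,\beta$ and the non-vanishing of $B$ give $\det[\gamma_1,\gamma_2',\gamma_2,\Jt\gamma_2]\neq 0$, so by the converse half of Theorem \ref{th::CAHwithDplus} the resulting $f$ is a nondegenerate centro-affine hypersurface with $\Jt$-tangent centro-affine field $-f$ and null-direction $\DD^+$; the prescribed $B$ makes $\omega_h/|\Theta|$ constant, forcing $-f$ to be a constant multiple of the Blaschke field, which is then automatically $\Jt$-tangent. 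The main obstacle I anticipate is organizing the $4\times 4$ determinantal computations defining $h_{xy}$ and $\Theta$; the key is to push every calculation through the $V^\pm$ decomposition so that the $z$-dependence collapses via $\cosh^2 z-\sinh^2 z=1$ and $\cosh z+\sinh z=e^z$, leaving the Blaschke condition as a clean algebraic constraint on $B$.
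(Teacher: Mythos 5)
Your proposal is correct and follows essentially the same route as the paper: reduce to Theorem \ref{th::CAHwithDplus}, split $\widetilde{\gamma_2}$ along the eigenspaces of $\Jt$, use nondegeneracy of $h$ (via $h(\partial_x,\partial_y)\neq 0$) to get $\gamma_1,\gamma_1'$ independent and hence $\gamma_2^+=A\gamma_1+B\gamma_1'$, and then pin down $B$ from the apolarity condition. The only cosmetic difference is that you phrase the Blaschke condition as constancy of $\omega_h/|\Theta|$ relative to the reference field $-f$ instead of fixing $\lambda$ explicitly and checking $\omega_h=|\Theta|$; this is equivalent by Theorem \ref{tw::ChangeOfTransversalField} and slightly streamlines the converse.
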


%\begin{thm}
%Let $f\colon M\mapsto \R^4$ be an affine hypersphere with a null-direction $\DD^+$ and $\Jt$-tangent Blaschke field $C$. Then there exist smooth curves
%$\gamma_1=(P_1,P_2,P_1,P_2)$ (so $\Jt \gamma_1=\gamma_1$) and $\gamma_2=(Q_1,Q_2,-Q_1,-Q_2)$ (so $\Jt \gamma_2=-\gamma_2$) with the property $\det [\gamma_1, \gamma_1',\gamma_2,\gamma_2']\neq 0$
%and a smooth function $A$ such that $f$ can be locally expressed in the form
%\begin{align*}
%f(x,y,z)=\Jt g(x,y)\cosh z-g(x,y)\sinh z,
%\end{align*}
%where $g(x,y)=(x+A(y))\cdot \gamma _1(y)+B(y)\gamma _1'(y)+\gamma _2(y)$ and
%$$
%B(y)=\frac{E}{\sqrt{|(P_1'(y)P_2(y)-P_1(y)P_2'(y))(Q_1'(y)Q_2(y)-Q_1(y)Q_2'(y))|}}
%$$
%for some non-zero constant $E$.
%\end{thm}

\begin{proof}
Since $f$ is a $\Jt$-tangent affine hypersphere it is proper so there exists $\lambda\neq 0$ such that $C=-\lambda f$. In particular $f$ is centro-affine. Now, by Theorem \ref{th::CAHwithDplus} for every point $p\in M$ there exists a neighborhood $U$ of $p$ such that $f|_{U}$ has a form:
\begin{align*}
f(x,y,z)=\Jt g(x,y)\cosh z-g\sinh z,
\end{align*}
where $g(x,y)=x\cdot \gamma _1(y)+\widetilde{\gamma_2}(y)$, $\Jt \gamma_1=\gamma_1$ and $\det [\gamma_1, \widetilde{\gamma_2}',\widetilde{\gamma_2},\Jt \widetilde{\gamma_2}]\neq 0.$ Let us define $\gamma_2:=\frac{1}{2}\cdot (\widetilde{\gamma_2}-\Jt \widetilde{\gamma_2})$ and $\gamma_3:=\frac{1}{2}\cdot (\widetilde{\gamma_2}+\Jt \widetilde{\gamma_2})$. Of course we have
\begin{align*}
\Jt \gamma_2=-\gamma_2,\quad \Jt\gamma_3=\gamma_3 \quad \text{and}\quad \widetilde{\gamma_2}=\gamma_2+\gamma_3.
 \end{align*}Because $h(\frac{\partial}{\partial x},\frac{\partial}{\partial x})=0$, $h(\frac{\partial}{\partial x},\frac{\partial}{\partial z})=0$ and since $h$ is nondegenerate we have that $h(\frac{\partial}{\partial x},\frac{\partial}{\partial y})\neq 0$. By the Gauss formula we have
\begin{align*}
f_{xy}=f_{\ast}(\nabla_{\frac{\partial}{\partial x}}\frac{\partial}{\partial y})-\lambda h(\frac{\partial}{\partial x},\frac{\partial}{\partial y})f.
\end{align*}
Since $h(\frac{\partial}{\partial x},\frac{\partial}{\partial y})\neq 0$ the above implies that $f_{xy}$ and $f_x$ are linearly independent and as a consequence $\gamma_1$ and $\gamma_1'$ are linearly independent too. Moreover $\Jt \gamma_1'=\gamma_1'$ that is $\gamma_1, \gamma_1'$ form a basis of the eigenspace of $\Jt$ related to the eigenvalue $1$. It means that there exist smooth functions $A,B$ such that
\begin{align*}
\gamma_3=A\gamma_1+B\gamma_1'.
\end{align*}
Now, $g$ can be rewritten in the form
\begin{align*}
g(x,y)=(x+A(y))\gamma_1(y)+B(y)\gamma_1'(y)+\gamma_2(y).
\end{align*}
Since $\det [\gamma_1, \widetilde{\gamma_2},\widetilde{\gamma_2}',\Jt \widetilde{\gamma_2}]\neq 0$ we compute
\begin{align*}
0\neq \det [\gamma_1,\gamma_2 +\gamma_3,\gamma_2' +\gamma_3',-\gamma_2 +\gamma_3]&=\det [\gamma_1,\gamma_2,\gamma_2' +\gamma_3',-\gamma_2 +\gamma_3]\\
&\phantom{=}+\det [\gamma_1,\gamma_3,\gamma_2' +\gamma_3',-\gamma_2 +\gamma_3]\\
&=\det [\gamma_1,\gamma_2,\gamma_2',\gamma_3]+\det [\gamma_1,\gamma_2,\gamma_3',\gamma_3]\\
&\phantom{=}-\det [\gamma_1,\gamma_3,\gamma_2',\gamma_2]-\det [\gamma_1,\gamma_3,\gamma_3',\gamma_2]\\
&=2\det [\gamma_1,\gamma_2,\gamma_2',\gamma_3]=2B\det [\gamma_1,\gamma_2,\gamma_2',\gamma_1'].
\end{align*}
So $\det [\gamma_1,\gamma_1',\gamma_2,\gamma_2']\neq 0.$
Now, since $\Jt \gamma_1=\gamma_1$ and $\Jt \gamma_2=-\gamma_2$ there exist smooth curves $\alpha =(\alpha_1,\alpha_2)\colon I\rightarrow \R^2$, $\beta=(\beta_1,\beta_2)\colon I\rightarrow \R^2$ such that
$$
\gamma_1=(\alpha,\alpha)=\left [\begin{matrix}
\alpha_1\\
\alpha_2\\
\alpha_1\\
\alpha_2
\end{matrix}\right ]\quad\text{and}\quad
\gamma_2=(\beta ,-\beta)=\left [\begin{matrix}
\beta_1\\
\beta_2\\
-\beta_1\\
-\beta_2
\end{matrix}\right ].
$$
By straightforward computation we get
\begin{align*}
\det [\gamma_1,\gamma_1',\gamma_2,\gamma_2']&=4\cdot (\alpha_1\alpha_2'-\alpha_1'\alpha_2)\cdot (\beta_1\beta_2'-\beta_1'\beta_2)\\
&=4\det [\alpha,\alpha']\cdot\det [\beta, \beta'],\\
\Theta (\frac{\partial}{\partial x},\frac{\partial}{\partial y},\frac{\partial}{\partial z}) &=2\lambda B\cdot \det [\gamma_1,\gamma_1',\gamma_2,\gamma_2']\\
&=8\lambda \cdot B\cdot\det [\alpha,\alpha']\cdot\det [\beta, \beta'],\\
\omega _h (\frac{\partial}{\partial x},\frac{\partial}{\partial y},\frac{\partial}{\partial z})&=\frac{1}{2|\lambda|\sqrt{|\lambda|}\cdot |B|}.
\end{align*}
Since $f$ is an affine hypersphere we have $\omega_h=|\Theta |$ that is
\begin{align*}
|B|=\frac{1}{4|\lambda|\sqrt[4]{|\lambda|}\sqrt{|\det [\alpha,\alpha']\cdot\det [\beta, \beta']|}}.
\end{align*}
Now we take $E:=\pm\frac{1}{4\lambda \sqrt[4]{|\lambda|}}$ (depending on the sign of $B$).
\par In order to prove the last part of the theorem let us define $C:=-\lambda f$, where
\begin{align*}
|\lambda|=\frac{1}{\sqrt[5]{4^4}\cdot\sqrt[5]{E^4}}.
\end{align*}
We also denote $\gamma_1:=(\alpha,\alpha)$ and $\gamma_2:=(\beta,-\beta)$. Now, by Lemma \ref{lm::DetRelation} and using the fact that $\gamma_1, \gamma_1',\gamma_1''$ are linearly dependent (in consequence $\det[\gamma_1,\gamma_1'',\gamma_2,\gamma_1']=0$) we get
\begin{align*}
\det[f_x,f_y,f_z,f]=\det [g_x,g_y,g,\Jt g]=-8B\det[\alpha ,\alpha']\cdot\det[\beta,\beta']\neq 0.
\end{align*}
The above implies that $f$ is an immersion and $C$ is transversal.
Of course $C$ is $\Jt$-tangent as well.
We also have
\begin{align*}
|\Theta  (\frac{\partial}{\partial x},\frac{\partial}{\partial y},\frac{\partial}{\partial z})|&=|\det [f_x,f_y,f_z,-\lambda f]|\\
&=8|\lambda |\cdot |B|\cdot|\det [\alpha,\alpha']\cdot\det [\beta, \beta']|\\
&=\frac{2\sqrt{|\det [\alpha,\alpha']\cdot\det [\beta, \beta']|}}{\sqrt[4]{|\lambda|}}
\end{align*}
Directly from (\ref{eq::formf}) we obtain
\begin{align*}
f_x=(\alpha,\alpha)(y)(\cosh z-\sinh z),
\end{align*}
so in particular $\frac{\partial}{\partial x}\in \DD^+$ because $\Jt f_x=f_x$. Moreover $f_{xx}=0$ implies $h(\frac{\partial}{\partial x},\frac{\partial}{\partial x})=0$ so $\DD^+$ is the null-direction for $f$. Finally, using the Gauss formula we compute
\begin{align*}
\omega_h(\frac{\partial}{\partial x},\frac{\partial}{\partial y},\frac{\partial}{\partial z})&=\frac{1}{\sqrt{|\lambda|}}\cdot |h(\frac{\partial}{\partial x},\frac{\partial}{\partial y})|=\frac{1}{\sqrt{|\lambda|}}\cdot \frac{1}{2|\lambda|\cdot |B|}\\
&=\frac{4|\lambda|\sqrt[4]{|\lambda|}\cdot\sqrt{|\det [\alpha,\alpha']\cdot\det [\beta, \beta']|} }{2|\lambda|\sqrt{|\lambda|}}
\end{align*}
That is $\omega_h=|\Theta|$.
\end{proof}
%%%%%%%%%%%%%%%%%%%%%%%%%%%%%%
%%%%%%%%%%%%%%%%%%%%%%%%%%%%%%

In a similar way one may prove
%%%%%%%%%%%%%%%%
%%    THEOREM  %
%%%%%%%%%%%%%%%%
\begin{thm}
Let $f\colon M\mapsto \R^4$ be an affine hypersphere with the null-direction $\DD^-$ and the $\Jt$-tangent Blaschke field $C$. Then there exist smooth planar curves
$\alpha\colon I\rightarrow \R^2$, $\beta\colon I\rightarrow \R^2$ with the property
\begin{align}\label{eq::detalphabetaDminus}
\det [\alpha,\alpha']\cdot \det [\beta,\beta']\neq 0
\end{align}
and a smooth function $A\colon I\rightarrow\R$, where $I$ is an open interval in $\R$, such that $f$ can be locally expressed in the form
\begin{align}\label{eq::formfDminus}
f(x,y,z)=\Jt g(x,y)\cosh z-g(x,y)\sinh z,
\end{align}
where $g(x,y)=(y+A(x))\cdot (\alpha,-\alpha)(x)+B(x)(\alpha',-\alpha')(x)+(\beta,\beta)(x)$ and
$$
B(x)=\frac{E}{\sqrt{|\det [\alpha(x),\alpha'(x)]\cdot \det [\beta(x),\beta'(x)]|}}
$$
for some non-zero constant $E$.
\par Moreover, for any smooth curves $\alpha, \beta\colon I\rightarrow \R^2$ such that (\ref{eq::detalphabetaDminus}) and for any smooth function $A\colon I\rightarrow \R$ and a non-zero constant $E$  the function $f$ given by {\rm(\ref{eq::formfDminus})} is the $\Jt$-tangent affine hypersphere with the null-direction $\DD^-$.

\end{thm}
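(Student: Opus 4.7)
The plan is to mirror the proof of the previous theorem with the $(-1)$-eigenspace of $\Jt$ playing the role that the $(+1)$-eigenspace played there. Since $f$ is a $\Jt$-tangent affine hypersphere it is proper (by the result from \cite{SZ3} recalled at the beginning of this section), so $C=-\lambda f$ for some nonzero constant $\lambda$ and $f$ is centro-affine. Theorem \ref{th::CAHwithDminus} then provides a local expression $f(x,y,z)=\Jt g(x,y)\cosh z - g(x,y)\sinh z$ with $g(x,y)=y\gamma_1(x)+\widetilde{\gamma_2}(x)$, where $\Jt\gamma_1=-\gamma_1$ and $\det[\gamma_1,\widetilde{\gamma_2}',\widetilde{\gamma_2},\Jt\widetilde{\gamma_2}]\neq 0$.

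Next, I would split $\widetilde{\gamma_2}=\gamma_2^{-}+\gamma_3$ into its $\Jt$-eigencomponents, $\gamma_2^{-}:=\tfrac12(\widetilde{\gamma_2}-\Jt\widetilde{\gamma_2})$ and $\gamma_3:=\tfrac12(\widetilde{\gamma_2}+\Jt\widetilde{\gamma_2})$, lying in the $(-1)$- and $(+1)$-eigenspaces respectively. Since $\partial_y\in\DD^-$ is a null direction and $h(\partial_y,\partial_z)=0$ by (\ref{Wzory::eq::6}), nondegeneracy of $h$ forces $h(\partial_y,\partial_x)\neq 0$, which via the Gauss formula translates into linear independence of $\gamma_1$ and $\gamma_1'$. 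As both vectors lie in the two-dimensional $(-1)$-eigenspace of $\Jt$, they form a basis of it, and I can write $\gamma_2^{-}=A\gamma_1+B\gamma_1'$ for some smooth functions $A,B$ on $I$. Parametrizing $\gamma_1=(\alpha,-\alpha)$ and $\gamma_3=(\beta,\beta)$ for planar curves $\alpha,\beta$ then yields $g$ in the claimed form.

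Third, I would use Lemma \ref{lm::DetRelation} together with the identity (\ref{eq::Detv1v2v3}) to reduce $\det[g_x,g_y,g,\Jt g]$ to a nonzero multiple of $B\det[\alpha,\alpha']\det[\beta,\beta']$, which simultaneously establishes $\det[\alpha,\alpha']\det[\beta,\beta']\neq 0$ and yields an explicit formula for $\Theta(\partial_x,\partial_y,\partial_z)$. Extracting $h(\partial_x,\partial_y)$ from the transversal component of $f_{xy}$ via the Gauss formula gives $\omega_h(\partial_x,\partial_y,\partial_z)$ in closed form. The affine hypersphere condition $\omega_h=|\Theta|$ then forces $|B|$ to equal the stated expression, and choosing $E:=\pm(4\lambda\sqrt[4]{|\lambda|})^{-1}$ according to the sign of $B$ completes the direct implication. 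For the converse, given the data $\alpha,\beta,A,E$, I would set $|\lambda|$ by $|\lambda|^{-5/4}=4|E|$ and $C:=-\lambda f$, and run the same determinant and volume-form computations in reverse: immersivity and $\Jt$-tangency follow from Lemma \ref{lm::DetRelation}, the null-direction property $\partial_y\in\DD^-$ with $h(\partial_y,\partial_y)=0$ follows immediately from $f_y=-\gamma_1 e^z$ and $f_{yy}=0$, and $\omega_h=|\Theta|$ holds by the choice of $\lambda$.

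The main obstacle will be the sign and constant bookkeeping in the volume-form computation, since the swap $\Jt\gamma_1=-\gamma_1$ (as opposed to $+\gamma_1$ in the $\DD^+$ case) propagates through the antisymmetric expansion of $\det[\gamma_1,\gamma_2^{-}+\gamma_3,(\gamma_2^{-})'+\gamma_3',-\gamma_2^{-}+\gamma_3]$ and must be tracked carefully to confirm that the resulting numerical constant relating $B$ to $E$ and $\lambda$ matches the $\DD^+$ formula. Apart from this bookkeeping the argument is structurally identical to the proof of the previous theorem.
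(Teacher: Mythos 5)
Your proposal is correct and follows exactly the route the paper intends: the paper omits the proof with the remark ``In a similar way one may prove,'' and your adaptation of the $\DD^+$ argument --- swapping the roles of the $\Jt$-eigenspaces so that $\gamma_1,\gamma_1'$ span the $(-1)$-eigenspace, absorbing the $(-1)$-eigencomponent of $\widetilde{\gamma_2}$ as $A\gamma_1+B\gamma_1'$, and parametrizing $\gamma_1=(\alpha,-\alpha)$, $\gamma_3=(\beta,\beta)$ --- is precisely the right mirror image, with the determinant and volume-form bookkeeping carrying over as you describe.
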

\begin{exa}
Let us consider two affine immersions ($i=1,2$) defined by
$$
f_i\colon \R^3\ni (x,y,z)\mapsto \Jt g_i(x,y)\cosh z-g_i(x,y)\sinh z\in\R^4
$$
where
\begin{align*}
g_1\colon \R^2\ni (x,y)\mapsto \left [\begin{matrix}
x+\cos y\\
yx+\sin y+\frac{1}{4}\\
x-\cos y\\
yx-\sin y+\frac{1}{4}
\end{matrix}\right ]\in \R^4
\end{align*}
and
\begin{align*}
g_2\colon \R^2\ni (x,y)\mapsto \left [\begin{matrix}
y+\cos x\\
xy+\sin x+\frac{1}{4}\\
-y+\cos x\\
-xy+\sin x-\frac{1}{4}
\end{matrix}\right ]\in \R^4
\end{align*}
with the transversal vector fields $C_i=-f_i$ for $i=1,2$. Of course $C_i$ is $\Jt$-tangent and $\tau_i =0$ and $S_i=\id$ for $i=1,2$.
By straightforward computations we obtain
\begin{align*}
h_1= \left [\begin{matrix}
0 & -2 & 0\\
-2 & \frac{1}{2} & 4x\\
0 & 4x &-1
\end{matrix}\right ],
\qquad
h_2= \left [\begin{matrix}
\frac{1}{2} & -2 & -4y\\
-2 & 0 & 0\\
-4y & 0 &-1
\end{matrix}\right ]
\end{align*}
in the canonical basis $\{\frac{\partial}{\partial x},\frac{\partial}{\partial y},\frac{\partial}{\partial z}\}$ of $\R^3$. Since $\det h_i=4$, $f_i$ is nondegenerate. In particular $\omega_{h_i}(\frac{\partial}{\partial x},\frac{\partial}{\partial y},\frac{\partial}{\partial z})=2$. By straightforward computations we also get
\begin{align*}
\Theta_i (\frac{\partial}{\partial x},\frac{\partial}{\partial y},\frac{\partial}{\partial z})=\det [{f_i}_x,{f_i}_y,{f_i}_z,C_i]=2
\end{align*}
So $f_i$ is the affine hypersphere for $i=1,2$.
\par For $f_1$ we have ${f_1}_x=\Jt {f_1}_x$ so $\frac{\partial}{\partial x}\in \DD^+$. Since $h_1(\frac{\partial}{\partial x},\frac{\partial}{\partial x})=0$ we have that $\DD^+$ is the null-direction for $f_1$. We also have
\begin{align*}
\Jt (x^2{f_1}_x+\frac{1}{4}{f_1}_y+x{f_1}_z)=-(x^2{f_1}_x+\frac{1}{4}{f_1}_y+x{f_1}_z)
\end{align*}
so the vector field $Y:=x^2\frac{\partial}{\partial x}+\frac{1}{4}\frac{\partial}{\partial y}+x\frac{\partial}{\partial z}$ belongs to $\DD^-$. Now, we compute
\begin{align*}
h_1(Y,Y)=2\cdot\frac{1}{4}x^2\cdot h_1(\frac{\partial}{\partial x},\frac{\partial}{\partial y})+\frac{1}{16}\cdot h_1(\frac{\partial}{\partial y},\frac{\partial}{\partial y})\\
+2\cdot\frac{1}{4}x\cdot h_1(\frac{\partial}{\partial y},\frac{\partial}{\partial z})+x^2\cdot h_1(\frac{\partial}{\partial z},\frac{\partial}{\partial z})=\frac{1}{32}\neq 0
\end{align*}
so $\DD^-$ is not the null-direction for $f_1$.
\par On the other hand, for $f_2$ we have ${f_2}_y=-\Jt {f_2}_y$ so $\frac{\partial}{\partial y}\in \DD^-$. Since $h_2(\frac{\partial}{\partial y},\frac{\partial}{\partial y})=0$ we have that $\DD^-$ is the null-direction for $f_2$.
We also have
\begin{align*}
\Jt (\frac{1}{4}{f_2}_x+y^2{f_2}_y-y{f_2}_z)=\frac{1}{4}{f_2}_x+y^2{f_2}_y-y{f_2}_z
\end{align*}
so the vector field $X:=\frac{1}{4}\frac{\partial}{\partial x}+y^2\frac{\partial}{\partial y}-y\frac{\partial}{\partial z}$ belongs to $\DD^+$. Now, we compute
\begin{align*}
h_2(X,X)=\frac{1}{16}\cdot h_2(\frac{\partial}{\partial x},\frac{\partial}{\partial x})+2\cdot\frac{1}{4}y^2\cdot h_2(\frac{\partial}{\partial x},\frac{\partial}{\partial y})\\
+2\cdot\frac{1}{4}(-y)\cdot h_2(\frac{\partial}{\partial x},\frac{\partial}{\partial z})+y^2\cdot h_2(\frac{\partial}{\partial z},\frac{\partial}{\partial z})=\frac{1}{32}\neq 0
\end{align*}so $\DD^+$ is not the null-direction for $f_2$.
\par Summarising $f_1$ is a $\Jt$-tangent affine hypersphere with the null-direction $\DD^+$ (and not $\DD^-$) while
$f_2$ is a $\Jt$-tangent affine hypersphere with the null-direction $\DD^-$ (and not $\DD^+$). Both hypersurfaces are not hyperquadrics.
\end{exa}

\emph{This Research was financed by the Ministry of Science and Higher Education of the Republic of Poland.}

\end{document}